\theoremstyle{plain}
\newtheorem{theorem}{Theorem}
\newtheorem{corollary}[theorem]{Corollary}
\theoremstyle{remark}
\newtheorem*{remark}{Remark}
\def\E{{\rm E}} 
\def\Var{{\rm Var}} 
\def\Cov{{\rm Cov}} 
\def\eps{\varepsilon}
\newcommand{\seqnum}[1]{\href{http://oeis.org/#1}{#1}}
\title{Parrondo games with two-dimensional \\ spatial dependence}
\author{S. N. Ethier\thanks{Department of Mathematics, University of Utah, 155 South 1400 East, Salt Lake City, UT 84112 USA. \href{mailto:ethier@math.utah.edu}{ethier@math.utah.edu}.  Partially supported by a grant from the Simons Foundation (209632).} \ and Jiyeon Lee\thanks{Department of Statistics, Yeungnam University, 214-1 Daedong, Kyeongsan, Kyeongbuk 712-749, South Korea. \href{mailto:leejy@yu.ac.kr}{leejy@yu.ac.kr}.  Supported by a 2014 Yeungnam University Research Grant.}}
\date{} 
\begin{document}
\maketitle

\begin{abstract}
Parrondo games with one-dimensional spatial dependence were introduced by Toral and extended to the two-dimensional setting by Mihailovi\'c and Rajkovi\'c.  $MN$ players are arranged in an $M\times N$ array.  There are three games, the fair, spatially independent game $A$, the spatially dependent game $B$, and game $C$, which is a random mixture or nonrandom pattern of games $A$ and $B$.  Of interest is $\mu_B$ (or $\mu_C$), the mean profit per turn at equilibrium to the set of $MN$ players playing game $B$ (or game $C$).  Game $A$ is fair, so if $\mu_B\le0$ and $\mu_C>0$, then we say the \textit{Parrondo effect} is present.    

We obtain a strong law of large numbers and a central limit theorem for the sequence of profits of the set of $MN$ players playing game $B$ (or game $C$).  The mean and variance parameters are computable for small arrays and can be simulated otherwise.  The SLLN justifies the use of simulation to estimate the mean.  The CLT permits evaluation of the standard error of a simulated estimate.  We investigate the presence of the Parrondo effect for both small arrays and large ones.  One of the findings of Mihailovi\'c and Rajkovi\'c was that ``capital evolution depends to a large degree on the lattice size.''  We provide evidence that this conclusion is incorrect.  Part of the evidence is that, under certain conditions, the means $\mu_B$ and $\mu_C$ converge as $M,N\to\infty$.  Proof requires that a related spin system on ${\bf Z}^2$ be ergodic.  However, our sufficient conditions for ergodicity are rather restrictive.
\end{abstract}

\section{Introduction}\label{intro}

Parrondo games with one-dimensional spatial dependence were introduced by Toral \cite{T01} and extended to the two-dimensional setting by Mihailovi\'c and Raj\-kovi\'c \cite{MR06}.  The basic game depends on two integer parameters, $M\ge3$ and $N\ge3$, and five probability parameters, $p_0$, $p_1$, $p_2$, $p_3$, $p_4\in[0,1]$.  There are $MN$ players arranged in an $M\times N$ array with periodic boundary conditions.  At each turn, one player is chosen at random to play.  Suppose it is the player at $(i,j)$, with $1\le i\le M$ and $1\le j\le N$.   He tosses what we call ``coin $m$'' with probability $p_m$ of heads if $m$ is the number of his nearest neighbors (i.e., the players at $(i+1,j)$, $(i-1,j)$, $(i,j+1)$, and $(i,j-1)$) who are winners.  (A winner is a player whose last game resulted in a win.)  Because of the periodic boundary conditions, $(M+1,j):=(1,j)$, $(0,j):=(M,j)$, $(i,N+1):=(i,1)$, and $(i,0):=(i,N)$.  The player wins one unit with heads and loses one unit with tails.  The game can be initialized arbitrarily.  

The game just described is known as game $B$.  Game $A$ is the special case of game $B$ in which $p_0=p_1=p_2=p_3=p_4=1/2$; in particular, it is not spatially dependent.  Game $C$ is either $(i)$ a $(\gamma,1-\gamma)$ random mixture of game $A$ and game $B$ (toss a coin with probability $\gamma$ of heads and play game $A$ if heads appears, game $B$ if tails), where $0<\gamma<1$, usually denoted by $C:=\gamma A+(1-\gamma)B$, or $(ii)$ a nonrandom pattern of games $A$ and $B$, played repeatedly; we restrict attention to patterns in which $r$ plays of game $A$ are followed by $s$ plays of game $B$, where $r$ and $s$ are positive integers, usually denoted by $C:=A^r B^s$.  

Of interest is $\mu_B$ (or $\mu_C$), the mean profit per turn at equilibrium to the set of $MN$ players playing game $B$ (or game $C$).  Game $A$ is fair, so if $\mu_B\le0$ and $\mu_C>0$, then we say the \textit{Parrondo effect} is present, or that $\bm p=(p_0,p_1,p_2,p_3,p_4)$ belongs to the \textit{Parrondo region}.  This means that the games with parameter vector $\bm p$ provide an example of \textit{Parrondo's paradox}, in which two fair or losing games ($A$ and $B$) combine to form a winning game ($C$).  Similarly, if $\mu_B\ge0$ and $\mu_C<0$, then we say the \textit{anti-Parrondo effect} is present, or that $\bm p$ belongs to the \textit{anti-Parrondo region}.    

We obtain a strong law of large numbers (SLLN) and a central limit theorem (CLT) for the sequence of profits of the set of $MN$ players playing game $B$ (or playing game $C$).  The mean and variance parameters are computable for $MN\le20$ (at least) and can be simulated otherwise.  The SLLN justifies the use of simulation to estimate the mean.  The CLT permits evaluation of the standard error of a simulated estimate.  To get a sense of what the Parrondo region looks like when $M=N=3$, we fix $p_0$ and $p_4$ and graph the surfaces $\mu_B=0$ and $\mu_C=0$ in the $(p_1,p_3,p_2)$ unit cube.  The region below the former and above the latter is a three-dimensional cross-section of the Parrondo region.

Actually, the phrase ``below the former'' requires some clarification because, paradoxically, $\mu_B$, as a function of $p_2$ for fixed $p_0$, $p_1$, $p_3$, and $p_4$, is not necessarily increasing (or even nondecreasing).  That is, it is possible that increasing $p_2$, the favorability of coin 2, can decrease $\mu_B$, the mean profit from game $B$.

One of the main conclusions of Mihailovi\'c and Rajkovi\'c \cite{MR06} was that ``capital evolution depends to a large degree on the lattice (i.e., array) size.''  We provide evidence that this conclusion is incorrect, just as it is known to be in the case of the one-dimensional spatial model (Ref.~\cite{EL12a}).  Moreover, we believe that Mihailovi\'c and Rajkovi\'c were misled by simulations whose sample size was inadequate.  Part of the evidence is that, under certain conditions, the means $\mu_B$ and $\mu_C$ converge as $M,N\to\infty$.  By analogy with the one-dimensional case, the proof requires that a related spin system on ${\bf Z}^2$ be ergodic.  Unlike in the one-dimensional case, however, our sufficient conditions for ergodicity are rather restrictive.

Parrondo games were first introduced as a discretized version of the flashing Brownian ratchet (Ajdari and Prost \cite{AP92}), in which an asymmetric ratchet potential is switched on (game $B$) and off (game $A$) repeatedly, effecting motion (game $C:=AB$).  The first examples were one-player games in which the asymmetric game $B$ was capital dependent (Harmer and Abbott \cite{HA99}) or history dependent (Parrondo, Harmer, and Abbott \cite{PHA00}).  Multi-player Parrondo games were introduced by Din\'is and Parrondo \cite{DP03}, Toral \cite{T01,T02}, and others.  Of these, perhaps Toral's \cite{T01} spatially dependent games have generated the most interest.  He referred to them as \textit{cooperative} Parrondo games but we prefer the term \textit{spatially dependent} Parrondo games so as to avoid conflict with the apparently unrelated field of cooperative game theory.  Most works on these games (Refs.~\cite{T01,MR03,X11,EL12a,EL12b,CL12,EL13a,EL13b,L14,Lee14,Y15,EL15b}) have been concerned with the one-dimensional model, and until now only Mihailovi\'c and Rajkovi\'c \cite{MR06} have discussed the two-dimensional model.  

See Abbott \cite{A10} for the most recent review of Parrondo's paradox.

\section{The Markov chain and its reduction}\label{MC}

A Markov chain can be defined that keeps track of the status (loser or winner, 0 or 1) of each of the $MN$ players playing game $B$, where $M,N\ge3$. Its state space is the set of $M\times N$ arrays of 0s and 1s, that is, the product space 
\begin{equation*}
\Sigma:=\{\bm x=(x_{i,j}): x_{i,j}\in\{0,1\}{\rm\ for\ }i=1,\ldots,M\;\text{and}\;j=1,\ldots,N\}=\{0,1\}^{MN}
\end{equation*}
with $2^{MN}$ states.  With the help of some notation, we can specify the one-step transition matrix.  Let $m_{i,j}(\bm x):=x_{i+1,j}+x_{i-1,j}+x_{i,j+1}+x_{i,j-1}$ be the number (0, 1, 2, 3, or 4) of winners among the four nearest neighbors of the player at $(i,j)$.  Of course, first subscripts $M+1$ and 0 are 1 and $M$; second subscripts $N+1$ and 0 are 1 and $N$.  Also, let $\bm x^{i,j}$ be the element of $\Sigma$ equal to $\bm x$ except at entry $(i,j)$.

The one-step transition matrix $\bm P$ for this Markov chain depends on $M\ge3$ and $N\ge3$ and on the five probability parameters, $p_0,p_1,p_2,p_3,p_4\in[0,1]$.  It has the form, for each $\bm x\in\Sigma$,
\begin{equation}\label{x to x^ij}
P(\bm x,\bm x^{i,j}):=\begin{cases}(MN)^{-1}p_{m_{i,j}(\bm x)}&\text{if $x_{i,j}=0$,}\\(MN)^{-1}q_{m_{i,j}(\bm x)}&\text{if $x_{i,j}=1$,}\end{cases}\quad i=1,\ldots,M,\; j=1,\ldots,N,
\end{equation}
\begin{equation}\label{x to x}
P(\bm x,\bm x):=(MN)^{-1}\bigg(\sum_{i,j:x_{i,j}=0}q_{m_{i,j}(\bm x)}+\sum_{i,j:x_{i,j}=1}p_{m_{i,j}(\bm x)}\bigg),
\end{equation}
and $P(\bm x,\bm y)=0$ otherwise, where $q_m:=1-p_m$ for $m=0,1,2,3,4$ and empty sums are 0.  We assume for now that $0<p_m<1$ for $m=0,1,2,3,4$, in which case the Markov chain is irreducible and aperiodic; we weaken this assumption in Section \ref{reducible}.  

For example, if $M=N=3$ and
$$
\arraycolsep=1.mm
\bm x:=\begin{pmatrix}0&0&1\\0&1&0\\1&0&1\end{pmatrix},\qquad\bm x':=\begin{pmatrix}0&0&1\\0&1&1\\1&1&1\end{pmatrix},
$$
we have $P(\bm x,\bm x')=0$, contrary to Mihailovi\'c and Rajkovi\'c \cite{MR06}, because at most one entry of $\bm x$ can change in one time step.  However, $P^2(\bm x,\bm x')=2[(1/9)p_3]^2$. 

The description of the model suggests that its long-term behavior should be invariant under rotation and/or reflection of rows and/or columns of the $M\times N$ array of players, as well as under matrix transposition if $M=N$.  In order to maximize the values of $M$ and $N$ for which exact computations are feasible, we use this idea to effectively reduce the size of the state space.  The technical details are explained in Lemma 1 of Ref.~\cite{EL12a}.

The lemma applies to our Markov chain with $MN$ playing the role of $N$ and $G$ being the subgroup of permutations of 
$$
\arraycolsep=3mm
\begin{pmatrix}1&2&\cdots&N\\N+1&N+2&\cdots&2N\\\vdots&\vdots&&\vdots\\(M-1)N+1&(M-1)N+2&\cdots&MN\end{pmatrix}
$$
(written for convenience as a matrix instead of as the $MN$-dimensional vector $(1,2,\ldots,MN)$) generated by $\sigma_1$, $\sigma_2$, $\sigma_3$, and $\sigma_4$, where $\sigma_1$ rotates the rows (row 1 becomes row 2, row 2 becomes row 3, and so on, and row $M$ becomes row 1), $\sigma_2$ reflects the rows (the rows are reverse ordered: row 1 and row $M$ are interchanged, row 2 and row $M-1$ are interchanged, and so on), $\sigma_3$ rotates the columns, and $\sigma_4$ reflects the columns.  In the case of a square array (i.e., $M=N$), we can include, along with these four permutations, $\sigma_5$, which transposes the matrix.

We must check that the condition 
\begin{equation}\label{lumpability}
P(\bm x_\sigma,\bm y_\sigma)=P(\bm x,\bm y),\qquad \bm x, \bm y\in\Sigma, 
\end{equation}
(from Lemma~1 of Ref.~\cite{EL12a}), where $(\bm x_\sigma)_{i,j}:=x_{\sigma(i,j)}$, is satisfied by these five permutations, and for this it is enough to verify that
$m_{i,j}(\bm x_\sigma)=m_{\sigma(i,j)}(\bm x)$ for $i=1,\ldots,M$, $j=1,\ldots,N$, and all $\bm x\in\Sigma$, whenever $\sigma$ is given by $\sigma_1$, $\sigma_2$, $\sigma_3$, $\sigma_4$, or (if $M=N$) $\sigma_5$.  

The practical effect of this is that we can reduce the size of the state space (namely, $2^{MN}$) to what we call its \textit{effective size}, which is simply the number of equivalence classes.  For example, if $M=N=3$ and we use $\sigma_1$, $\sigma_2$, $\sigma_3$, and $\sigma_4$, there are $2^9=512$ states and 36 equivalence classes; see Ref.~\cite{E13} for a list.  If we use $\sigma_5$ as well, there are only 26 equivalence classes; see Ref.~\cite{EL15} for a list.  Table \ref{sequences} lists the number of equivalence classes for small arrays.

\begin{table}[ht]
\caption{\label{sequences}The size and effective size of the state space for an $M\times N$ array ($3\le M\le N$ and $MN\le25$).  Columns 3 and 4 are from Sloan \cite{S15}, specifically, \seqnum{A222188} and \seqnum{A255016}.\medskip}
\catcode`@=\active \def@{\hphantom{0}}
\begin{center}
\begin{footnotesize}
\begin{tabular}{ccccc}
\noalign{\smallskip}
\hline
\noalign{\smallskip}
number of   &  size of   &  effective size  & effective size \\
players & @@@state space@@@ & with row/column  & with, in addition, \\
 $M\times N$ & $2^{MN}$ &  rotation/reflection &  transposition,  \\
      &          &                      &    if $M=N$ \\
\noalign{\smallskip}
\hline
\noalign{\smallskip}
$@3\times3$ & @@@@@512 & @@@@36 &     @@@@26 \\
$@3\times4$ & @@@@4096 & @@@158\\  
$@3\times5$ & @@@32768 & @@@708\\  
$@3\times6$ & @@262144 & @@4236\\  
$@3\times7$ & @2097152 & @26412\\ 
$@3\times8$ & 16777216 & 180070\\ 
\noalign{\medskip}
$@4\times4$ & @@@65536 & @@1459 &     @@@805 \\
$@4\times5$ & @1048576 & @14676\\
$@4\times6$ & 16777216 & 184854\\
\noalign{\medskip}
$@5\times5$ & 33554432 & 340880 &     172112 \\
\noalign{\smallskip}
\hline
\end{tabular}
\end{footnotesize}
\end{center}
\end{table}

The reduced Markov chain has state space $\bar\Sigma$ (the set of equivalence classes) and one-step transition matrix $\bar{\bm P}$ given by
\begin{equation}\label{Pbar}
\bar P([\bm x],[\bm y]):=\sum_{{\bm y}'\in[\bm y]}P(\bm x,{\bm y}'),
\end{equation}
where $[\bm x]\in\bar\Sigma$ denotes the equivalence class containing $\bm x\in\Sigma$.  For \eqref{Pbar} to be well defined, $\bm P$ must be \textit{lumpable} with respect to the equivalence relation (Kemeny and Snell \cite[p.~124]{KS76}), and \eqref{lumpability} is a sufficient condition for this.

\section{SLLN and CLT}\label{SLLN}

We need versions of the SLLN and the CLT that are suited to game $B$ and game $C:=\gamma A+(1-\gamma)B$.  The key result is Theorem 1 of Ref.~\cite{EL09}.  

Our original Markov chain has state space $\Sigma:=\{0,1\}^{MN}$ and its one-step transition matrix $\bm P$ is given by (\ref{x to x^ij}) and (\ref{x to x}), where $q_m:=1-p_m$ and we assume for now that $0<p_m<1$ for $m=0,1,2,3,4$.  The Markov chain is irreducible and aperiodic.  Theorem 1 of Ref.~\cite{EL09} does not apply directly because the payoffs are not completely specified by the one-step transitions of the Markov chain.  For example, unless $\bm x$ has all 0s or all 1s, a transition from $\bm x$ to $\bm x$ could be the result of a win or a loss.  One way around this is to augment the state space.  In Ref.~\cite{EL12a} we kept track not only of $\bm x\in\Sigma$ but also of the label of the next player to play.  Here a different augmentation is more effective.  We let $\Sigma^\circ:=\Sigma\times \{-1,1\}$ and keep track not only of $\bm x\in\Sigma$ but also of the profit from the last game played, say $s\in\{-1,1\}$.  The new one-step transition matrix $\bm P^\circ$ has the form, for every $(\bm x,s)\in\Sigma^\circ$,
\begin{equation}\label{x to x^ij,1}
P^\circ((\bm x,s),(\bm x^{i,j},1)):=\begin{cases}(MN)^{-1}p_{m_{i,j}(\bm x)}&\text{if $x_{i,j}=0$,}\\
0&\text{if $x_{i,j}=1$,}\end{cases}
\end{equation}
\begin{equation}\label{x to x^ij,-1}
P^\circ((\bm x,s),(\bm x^{i,j},-1)):=\begin{cases}0&\text{if $x_{i,j}=0$,}\\
(MN)^{-1}q_{m_{i,j}(\bm x)}&\text{if $x_{i,j}=1$,}\end{cases}
\end{equation}
for $i=1,\ldots,M$ and $j=1,\ldots,N$, and
\begin{equation}\label{x to x,1}
P^\circ((\bm x,s),(\bm x,1)):=(MN)^{-1}\sum_{i,j:x_{i,j}=1}p_{m_{i,j}(\bm x)},
\end{equation}
\begin{equation}\label{x to x,-1}
P^\circ((\bm x,s),(\bm x,-1)):=(MN)^{-1}\sum_{i,j:x_{i,j}=0}q_{m_{i,j}(\bm x)},
\end{equation}
where $q_m:=1-p_m$ for $m=0,1,2,3,4$ and empty sums are 0.  There are two inaccessible states, $(\bm 0,1)$ and $(\bm 1,-1)$, but if they were excluded from the state space, the Markov chain would be irreducible and aperiodic.  It will be convenient to keep them, so we let $\bm\pi^\circ$ denote the unique stationary distribution, which has entry 0 at each of the two inaccessible states.  The payoff function $w^\circ$ can now be defined by
\begin{eqnarray*}
w^\circ((\bm x,s),(\bm x^{i,j},t))=t\text{ if $x_{i,j}=(1-t)/2$,}\qquad w^\circ((\bm x,s),(\bm x,t))=t
\end{eqnarray*}
for all $(\bm x,s)\in\Sigma^\circ$, $i=1,2,\ldots,M$, $j=1,2,\ldots,N$, and $t\in\{-1,1\}$, and $w^\circ=0$ otherwise.  This allows us to define the matrix $\bm W^\circ$ and then $\dot{\bm P}^\circ:={\bm P}^\circ\circ \bm W^\circ$ and $\ddot{\bm P}^\circ:={\bm P}^\circ\circ\bm W^\circ\circ\bm W^\circ$, the Hadamard (or entrywise) products.  Theorem~1 of Ref.~\cite{EL09} yields the following.

\begin{theorem}\label{SLLN/CLT}
Let $0<p_m<1$ for $m=0,1,2,3,4$, so that the Markov chain with one-step transition matrix $\bm P^\circ$ is ergodic, and let the row vector $\bm\pi^\circ$ be its unique stationary distribution.  Define
\begin{equation*}
\mu^\circ=\bm\pi^\circ\dot{\bm P}^\circ\bm1,\qquad (\sigma^\circ)^2=\bm\pi^\circ\ddot{\bm P}^\circ\bm 1-(\bm\pi^\circ\dot{\bm P}^\circ\bm 1)^2+2\bm\pi^\circ\dot{\bm P}^\circ(\bm Z^\circ-\bm1\bm\pi^\circ)\dot{\bm P}^\circ\bm 1.
\end{equation*}
where $\bm1$ denotes a column vector of $1$s with entries indexed by $\Sigma^\circ$ and $\bm Z^\circ:=(\bm I-(\bm P^\circ-\bm1\bm\pi^\circ))^{-1}$ is the fundamental matrix.  (Notice that $\bm1\bm\pi^\circ$ is the square matrix each of whose rows is equal to $\bm\pi^\circ$.)  Let $\{X_n^\circ\}_{n\ge0}$ be a time-homogeneous Markov chain in $\Sigma^\circ$ with one-step transition matrix $\bm P^\circ$, and let the initial distribution be arbitrary.  For each $n\ge1$, define $\xi_n:=w^\circ(X_{n-1}^\circ,X_n^\circ)$ and $S_n:=\xi_1+\cdots+\xi_n$.  Then $\lim_{n\to\infty}n^{-1}S_n=\mu^\circ$ \emph{a.s.} and, if $(\sigma^\circ)^2>0$, then $(S_n-n\mu^\circ)/\sqrt{n(\sigma^\circ)^2}\to_d N(0,1)$ as $n\to\infty$.
\end{theorem}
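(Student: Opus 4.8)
The plan is to apply Theorem~1 of Ref.~\cite{EL09} directly to the augmented chain $\{X_n^\circ\}$ with transition matrix $\bm P^\circ$ and transition payoff function $w^\circ$; the statement is then just a matter of reading off the conclusions. The first task is to put the cited theorem in force by exhibiting ergodicity. Inspection of \eqref{x to x^ij,1}--\eqref{x to x,-1} shows that no state has a positive-probability transition into $(\bm 0,1)$ or $(\bm 1,-1)$, so from an arbitrary initial distribution the chain lies in $\Sigma^\circ\setminus\{(\bm 0,1),(\bm 1,-1)\}$ after one step and stays there. On that set the chain is irreducible --- any configuration can be turned into any other by flipping the differing entries one at a time, each flip having positive probability since $0<p_m<1$, and for a configuration with both a $0$ and a $1$ the profit coordinate can be made either $+1$ or $-1$ by a suitable last flip or self-transition --- and aperiodic, because any such configuration has a self-transition of positive probability. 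Hence the restricted chain is ergodic with a unique stationary distribution, whose extension by zeros at the two inaccessible states is $\bm\pi^\circ$. Since the SLLN and CLT are tail statements, unaffected by the first step or by the choice of initial distribution, it is enough to apply Theorem~1 of Ref.~\cite{EL09} to the ergodic chain.

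The second, and only genuinely delicate, task is to confirm that $S_n=\sum_{k=1}^n w^\circ(X_{k-1}^\circ,X_k^\circ)$ is exactly the accumulated profit of the set of $MN$ players after $n$ turns --- this is the whole point of the augmentation. For the original chain $\bm P$ the per-turn profit is not a function of the transition alone: a self-transition $\bm x\to\bm x$ can arise from the chosen winner winning again or from the chosen loser losing again, so Theorem~1 of Ref.~\cite{EL09} does not apply to $\bm P$. Adjoining the last payoff $s$ removes this ambiguity. If the chosen player at $(i,j)$ has $x_{i,j}=0$ and tosses heads, his status flips to $1$ and the profit is $+1$; if $x_{i,j}=1$ and he tosses tails, his status flips to $0$ and the profit is $-1$; and when the configuration is unchanged, the second coordinate of the new state records whether a winner won ($t=1$) or a loser lost ($t=-1$). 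These three cases are exactly $w^\circ((\bm x,s),(\bm x^{i,j},t))=t$ when $x_{i,j}=(1-t)/2$ and $w^\circ((\bm x,s),(\bm x,t))=t$, with $w^\circ=0$ on the probability-zero transitions. Thus $\xi_n$ is the profit on turn $n$ and $S_n$ the cumulative profit. (Summing \eqref{x to x^ij,1}--\eqref{x to x,-1} over the profit coordinate of the target recovers \eqref{x to x^ij}--\eqref{x to x}, so the $\bm x$-marginal of $\{X_n^\circ\}$ is the game-$B$ chain, as it must be.)

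Finally, I would match the parameters. With $\bm W^\circ$ the matrix of the values $w^\circ$, and $\dot{\bm P}^\circ=\bm P^\circ\circ\bm W^\circ$, $\ddot{\bm P}^\circ=\bm P^\circ\circ\bm W^\circ\circ\bm W^\circ$ as in the text, Theorem~1 of Ref.~\cite{EL09} gives $n^{-1}S_n\to\bm\pi^\circ\dot{\bm P}^\circ\bm1$ a.s.\ and, when the limiting variance is positive, $(S_n-n\mu^\circ)/\sqrt{n(\sigma^\circ)^2}\to_d N(0,1)$, the variance being $\bm\pi^\circ\ddot{\bm P}^\circ\bm1-(\bm\pi^\circ\dot{\bm P}^\circ\bm1)^2+2\bm\pi^\circ\dot{\bm P}^\circ(\bm Z^\circ-\bm1\bm\pi^\circ)\dot{\bm P}^\circ\bm1$ with $\bm Z^\circ=(\bm I-(\bm P^\circ-\bm1\bm\pi^\circ))^{-1}$; the indicated inverse exists, as is standard, because $1$ is a simple eigenvalue of $\bm P^\circ$ (the two transient states contributing only eigenvalues of modulus $<1$). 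These are precisely the $\mu^\circ$ and $(\sigma^\circ)^2$ of the statement, so the proof is complete. The one place to tread carefully is the bookkeeping of the previous paragraph --- verifying that the augmentation renders the per-turn profit a deterministic function of consecutive augmented states --- since that, and not any hard analysis, is what licenses the appeal to Ref.~\cite{EL09}.
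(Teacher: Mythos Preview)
Your proposal is correct and follows exactly the paper's approach: the paper does not give a formal proof of this theorem at all, but simply sets up the augmented state space $\Sigma^\circ$, notes that excluding the two inaccessible states yields an irreducible aperiodic chain, defines $w^\circ$, and then states that ``Theorem~1 of Ref.~\cite{EL09} yields the following.'' Your write-up spells out in more detail the ergodicity check and the verification that $w^\circ$ really records the per-turn profit, but these are precisely the points the paper handles in the paragraph preceding the theorem statement.
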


\begin{remark}
The abbreviation ``a.s.'' stands for ``almost surely,'' meaning ``with probability 1.''  The symbol $\to_d$ denotes convergence in distribution.
\end{remark}

We next show that there are simpler expressions for this mean and variance.  Let us define
\begin{eqnarray*}
\mu:=\bm\pi\dot{\bm P}\bm1,\qquad\sigma^2&:=&\bm\pi\ddot{\bm P}\bm 1-(\bm\pi\dot{\bm P}\bm 1)^2+2\bm\pi\dot{\bm P}(\bm Z-\bm1\bm\pi)\dot{\bm P}\bm 1,\\
\bar\mu:=\bar{\bm\pi}\dot{\bar{\bm P}}{\bm1},\qquad\bar{\sigma}^2&:=&\bar{\bm\pi}\ddot{\bar{\bm P}}\bm 1-(\bar{\bm\pi}\dot{\bar{\bm P}}\bm 1)^2+2\bar{\bm\pi}\dot{\bar{\bm P}}(\bar{\bm Z}-\bm1\bar{\bm\pi})\dot{\bar{\bm P}}\bm 1,
\end{eqnarray*}
where $\bm1$ is the column vector of 1s of the appropriate dimension and $\dot{\bm P}$ is obtained from $\bm P$, $\ddot{\bm P}$ from $\dot{\bm P}$, $\dot{\bar{\bm P}}$ from $\bar{\bm P}$, and $\ddot{\bar{\bm P}}$ from $\dot{\bar{\bm P}}$ by replacing each $q_m$ by $-q_m$ for $m=0,1,2,3,4$.  This ``rule of thumb'' requires some caution:  It must be applied before any simplifications to $\bm P$ or $\bar{\bm P}$ are made using $q_m=1-p_m$.  Of course, $\bm\pi$ and $\bar{\bm\pi}$ are the unique stationary distributions, and $\bm Z$ and $\bar{\bm Z}$ are the fundamental matrices, of $\bm P$ and $\bar{\bm P}$.

\begin{theorem}\label{means,variances}
\begin{equation}\label{means}
\mu^\circ=\mu=\bar\mu
\end{equation}
and
\begin{equation}\label{variances}
(\sigma^\circ)^2=\sigma^2=\bar\sigma^2.
\end{equation}
\end{theorem}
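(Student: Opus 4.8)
The plan is to derive both chains of equalities from two ``collapsing'' (intertwining) maps: one relating the augmented chain $\bm P^\circ$ to $\bm P$ by forgetting the profit coordinate, and one relating $\bm P$ to the lumped chain $\bar{\bm P}$. For each map I would check that the transition matrix, its ``dotted'' and ``double‑dotted'' versions, its stationary vector, and its fundamental matrix are all compatible with the map; the matrix formulas defining these means and variances then collapse term by term.

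\emph{Step 1: $\mu^\circ=\mu$ and $(\sigma^\circ)^2=\sigma^2$.} Let $\bm\Psi$ be the $\Sigma^\circ\times\Sigma$ zero--one matrix with $\bm\Psi((\bm x,s),\bm y):=\bm1_{\{\bm x=\bm y\}}$. The key identities are
\begin{equation*}
\bm P^\circ\bm\Psi=\bm\Psi\bm P,\qquad \dot{\bm P}^\circ\bm\Psi=\bm\Psi\dot{\bm P},\qquad \ddot{\bm P}^\circ\bm\Psi=\bm\Psi\ddot{\bm P},
\end{equation*}
that is, for all $(\bm x,s)$ and $\bm y$: $\sum_t P^\circ((\bm x,s),(\bm y,t))=P(\bm x,\bm y)$, $\sum_t w^\circ((\bm x,s),(\bm y,t))\,P^\circ((\bm x,s),(\bm y,t))=\dot P(\bm x,\bm y)$, and the same with $w^\circ$ replaced by $(w^\circ)^2$. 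For the third identity one notes that $|w^\circ|=1$ everywhere on the support of $\bm P^\circ$, so $\ddot{\bm P}^\circ=\bm P^\circ$, and correspondingly $\ddot{\bm P}=\bm P$ since applying the substitution $q_m\mapsto-q_m$ twice is the identity. Now $\bm P^\circ\bm\Psi=\bm\Psi\bm P$ says that $\bm P^\circ$ is lumpable over the partition of $\Sigma^\circ$ whose blocks are the pairs $\{(\bm x,1),(\bm x,-1)\}$, with lumped matrix $\bm P$; hence $\bm\pi^\circ\bm\Psi$ is stationary for $\bm P$ and so equals $\bm\pi$ by uniqueness. Using $(\bm P^\circ-\bm1\bm\pi^\circ)^k=(\bm P^\circ)^k-\bm1\bm\pi^\circ$ for $k\ge1$ and the convergent expansion $\bm Z^\circ-\bm1\bm\pi^\circ=\sum_{k\ge0}\big((\bm P^\circ)^k-\bm1\bm\pi^\circ\big)$, the first identity also gives $(\bm Z^\circ-\bm1\bm\pi^\circ)\bm\Psi=\bm\Psi(\bm Z-\bm1\bm\pi)$. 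Feeding these relations, together with $\bm1_{\Sigma^\circ}=\bm\Psi\bm1_\Sigma$, into the definitions of $\mu^\circ$ and $(\sigma^\circ)^2$ collapses every term onto the corresponding term of $\mu$ and $\sigma^2$; for instance $\mu^\circ=\bm\pi^\circ\dot{\bm P}^\circ\bm\Psi\bm1_\Sigma=\bm\pi^\circ\bm\Psi\dot{\bm P}\bm1_\Sigma=\bm\pi\dot{\bm P}\bm1_\Sigma=\mu$, while the cross term $\bm\pi^\circ\dot{\bm P}^\circ(\bm Z^\circ-\bm1\bm\pi^\circ)\dot{\bm P}^\circ\bm1$ becomes $\bm\pi\dot{\bm P}(\bm Z-\bm1\bm\pi)\dot{\bm P}\bm1$.

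\emph{Step 2: $\mu=\bar\mu$ and $\sigma^2=\bar\sigma^2$.} This is formally identical, with $\bm\Psi$ replaced by the $\Sigma\times\bar\Sigma$ zero--one matrix $\bm\Phi$ with $\bm\Phi(\bm x,[\bm y]):=\bm1_{\{[\bm x]=[\bm y]\}}$. Here $\bm P\bm\Phi=\bm\Phi\bar{\bm P}$ is precisely the lumpability established via the condition (\ref{lumpability}) and the definition (\ref{Pbar}); crucially, neither (\ref{Pbar}) nor the combinatorial identity $m_{i,j}(\bm x_\sigma)=m_{\sigma(i,j)}(\bm x)$ uses the relation $q_m=1-p_m$, so $\bm P\bm\Phi=\bm\Phi\bar{\bm P}$ is an identity in the formal variables $p_m,q_m$ and therefore survives the substitution $q_m\mapsto-q_m$ (applied once or twice), yielding $\dot{\bm P}\bm\Phi=\bm\Phi\dot{\bar{\bm P}}$ and $\ddot{\bm P}\bm\Phi=\bm\Phi\ddot{\bar{\bm P}}$. (This is exactly why the ``rule of thumb'' must be applied before any simplification using $q_m=1-p_m$.) Exactly as in Step~1 one then obtains $\bm\pi\bm\Phi=\bar{\bm\pi}$, $(\bm Z-\bm1\bm\pi)\bm\Phi=\bm\Phi(\bar{\bm Z}-\bm1\bar{\bm\pi})$, and $\bm\Phi\bm1_{\bar\Sigma}=\bm1_\Sigma$, and substituting these into the definitions of $\bar\mu$ and $\bar\sigma^2$ recovers $\mu$ and $\sigma^2$.

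The one genuinely computational step---the main obstacle---is the middle identity $\dot{\bm P}^\circ\bm\Psi=\bm\Psi\dot{\bm P}$: it requires matching the entrywise definition of $\bm W^\circ$, hence of $\dot{\bm P}^\circ=\bm P^\circ\circ\bm W^\circ$, against the formal ``$q_m\mapsto-q_m$'' recipe for $\dot{\bm P}$, separately in the cases $\bm y=\bm x^{i,j}$ with $x_{i,j}=0$, $\bm y=\bm x^{i,j}$ with $x_{i,j}=1$, and $\bm y=\bm x$, using (\ref{x to x^ij,1})--(\ref{x to x,-1}) and the definition of $w^\circ$. Everything else is routine linear algebra; the only side point to note is that the inaccessible states $(\bm0,1)$ and $(\bm1,-1)$ are transient with no transitions between them, so $(\bm P^\circ)^k\to\bm1\bm\pi^\circ$ geometrically and the series for $\bm Z^\circ-\bm1\bm\pi^\circ$ converges, just as for the irreducible matrices $\bm P$ and $\bar{\bm P}$.
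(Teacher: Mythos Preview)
Your proof is correct and follows essentially the same route as the paper: both use the series expansion $\bm Z-\bm1\bm\pi=\sum_{k\ge0}(\bm P^k-\bm1\bm\pi)$ and the fact that $P^\circ((\bm x,s),(\bm y,t))$ does not depend on $s$ to collapse the $\circ$-quantities onto the unadorned ones. Your packaging via the intertwining matrices $\bm\Psi$ and $\bm\Phi$ is a bit more systematic than the paper's explicit summations, and your Step~2 (observing that lumpability is a \emph{formal} identity in $p_m,q_m$ and hence survives $q_m\mapsto-q_m$) is more self-contained than the paper's, which simply cites Eq.~(9) of Ref.~\cite{EL12b} for the second equalities.
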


\begin{remark}
Before proving this, let us explain its significance.  $\mu^\circ$ and $(\sigma^\circ)^2$ are the mean and variance that appear in the SLLN and the CLT.  They are defined in terms of $\bm P^\circ$, the augmented one-step transition matrix.  $\mu$ and $\sigma^2$ are defined analogously in terms of $\bm P$, the original one-step transition matrix.  $\bar{\mu}$ and $\bar{\sigma}^2$ are defined analogously in terms of $\bar{\bm P}$, the reduced one-step transition matrix.  $\mu^\circ$ and $(\sigma^\circ)^2$ are easiest to interpret, whereas $\bar{\mu}$ and $\bar{\sigma}^2$ are easiest to evaluate.

To emphasize that $\bm P$ of \eqref{x to x^ij} and \eqref{x to x} describes game $B$, we denote it by $\bm P_B$, and we denote its unique stationary distribution by $\bm\pi_B$.  $\bm P_A$ is the special case in which $p_0=p_1=p_2=p_3=p_4=1/2$.  For game $C:=\gamma A+(1-\gamma)B$, we denote the one-step transition matrix by $\bm P_{(\gamma,1-\gamma)}:=\gamma \bm P_A+(1-\gamma)\bm P_B$.  This is just $\bm P$ but with $\bm p=(p_0,p_1,p_2,p_3,p_4)$ replaced by $\bm p'=(p_0',p_1',p_2',p_3',p_4')$, where $p_m':=\gamma(1/2)+(1-\gamma)p_m$ for $m=0,1,2,3,4$.  Thus, Theorems \ref{SLLN/CLT} and \ref{means,variances} apply to game $B$ (using $\bm p$) and to game $C:=\gamma A+(1-\gamma)B$ (using $\bm p'$).
\end{remark}

\begin{proof}
To emphasize the fact that $P^\circ((\bm x,s),(\bm y,t))$ does not depend on $s$, we write it temporarily as $P^\circ((\bm x,\cdot),(\bm y,t))$.  This leads to
\begin{equation}\label{mean-eq}
\mu^\circ=\bm\pi^\circ\dot{\bm P}^\circ\bm1=\sum_{\bm x,s,\bm y,t}\pi^\circ(\bm x,s)\dot{P}^\circ((\bm x,\cdot),(\bm y,t))=\sum_{\bm x,\bm y}\pi(\bm x)\dot{P}(\bm x,\bm y)=\bm\pi\dot{\bm P}\bm1=\mu.
\end{equation}
To show that $(\sigma^\circ)^2=\sigma^2$, we need to show that
\begin{equation*}
\bm\pi^\circ\dot{\bm P}^\circ(\bm Z^\circ-\bm1\bm\pi^\circ)\dot{\bm P}^\circ\bm 1=\bm\pi\dot{\bm P}(\bm Z-\bm1\bm\pi)\dot{\bm P}\bm 1.
\end{equation*}
Now, by Kemeny and Snell \cite[p.~75]{KS76},
$\bm Z-\bm1\bm\pi=\sum_{m=1}^\infty(\bm P^{m-1}-\bm1\bm\pi)$,
so it is enough to show that
$$
\bm\pi^\circ\dot{\bm P}^\circ((\bm P^\circ)^{m-1}-\bm1\bm\pi^\circ)\dot{\bm P}^\circ\bm 1=\bm\pi\dot{\bm P}(\bm P^{m-1}-\bm1\bm\pi)\dot{\bm P}\bm 1,\qquad m\ge1,
$$
or that
$$
\bm\pi^\circ\dot{\bm P}^\circ(\bm P^\circ)^{m-1}\dot{\bm P}^\circ\bm 1=\bm\pi\dot{\bm P}\bm P^{m-1}\dot{\bm P}\bm 1,\qquad m\ge1.
$$
Given $m\ge1$, we have
\begin{eqnarray*}
&&\!\!\!\!\!\!\bm\pi^\circ\dot{\bm P}^\circ(\bm P^\circ)^{m-1}\dot{\bm P}^\circ\bm 1\nonumber\\
&=&\sum_{\bm x,s,\bm y,t,\bm z,u,\bm w,v}\pi^\circ(\bm x,s)\dot{P}^\circ((\bm x,\cdot),(\bm y,t))(P^\circ)^{m-1}((\bm y,\cdot),(\bm z,u))\dot{P}^\circ((\bm z,\cdot),(\bm w,v))\nonumber\\
&=&\sum_{\bm x,\bm y,t,\bm z,\bm w,v}\pi(\bm x)\dot{P}^\circ((\bm x,\cdot),(\bm y,t))P^{m-1}(\bm y,\bm z)\dot{P}^\circ((\bm z,\cdot),(\bm w,v))\nonumber\\
&=&\sum_{\bm x,\bm y,\bm z,\bm w}\pi(\bm x)\dot{P}(\bm x,\bm y)P^{m-1}(\bm y,\bm z)\dot{P}(\bm z,\bm w)\nonumber\\
&=&\bm\pi\dot{\bm P}{\bm P}^{m-1}\dot{\bm P}\bm 1.
\end{eqnarray*}

The second equalites in \eqref{means} and \eqref{variances} are now clear because it is enough that
$$
\bm\pi\dot{\bm P}\bm 1=\bar{\bm\pi}\dot{\bar{\bm P}}\bm 1,\qquad
\bm\pi\dot{\bm P}\bm P^{m-1}\dot{\bm P}\bm 1=\bar{\bm\pi}\dot{\bar{\bm P}}\bar{\bm P}^{m-1}\dot{\bar{\bm P}}\bm 1,\quad m\ge1,
$$
and this is a consequence of Eq.~(9) in Ref.~\cite{EL12b}.
\end{proof}

Next we need versions of the SLLN and the CLT suited to game $C:=A^rB^s$.  The key result is Theorem 6 of Ref.~\cite{EL09}.  

For the same reason as before, the theorem does not apply directly to $\bm P_B$ (given by \eqref{x to x^ij} and \eqref{x to x}) and $\bm P_A$ (the special case of $\bm P_B$ with $p_0=p_1=p_2=p_3=p_4=1/2$).  Therefore we again consider the Markov chain in $\Sigma^\circ$ with one-step transition matrix $\bm P^\circ$ as in \eqref{x to x^ij,1}--\eqref{x to x,-1}.  We let $\bm P_B^\circ=\bm P^\circ$ and $\bm P_A^\circ$ be $\bm P^\circ$ with $p_0=p_1=p_2=p_3=p_4=1/2$.  With $\bm W^\circ$ as before, the theorem applies.

\begin{theorem}\label{SLLN/CLT2}
Fix $r,s\ge1$.  Assume that $\bm P^\circ:=(\bm P_A^\circ)^r(\bm P_B^\circ)^s$, as well as all cyclic permutations of $(\bm P_A^\circ)^r(\bm P_B^\circ)^s$, are ergodic, and let the row vector $\bm\pi^\circ$ be the unique stationary distribution of $\bm P^\circ$.  Let
\begin{equation*}
\mu_{[r,s]}^\circ:=\frac{1}{r+s}\sum_{v=0}^{s-1}\bm\pi^\circ(\bm P_A^\circ)^r(\bm P_B^\circ)^v\dot{\bm P}_B^\circ\bm1
\end{equation*}
and
\begin{eqnarray*}
&&\!\!\!(\sigma_{[r,s]}^\circ)^2\\
&&{}=1-\frac{1}{r+s}\sum_{v=0}^{s-1}(\bm\pi(\bm P_A^\circ)^r(\bm P_B^\circ)^v\dot{\bm P}_B^\circ\bm1)^2\\
&&\quad{}+\frac{2}{r+s}\bigg[\sum_{u=0}^{r-1}\sum_{v=0}^{s-1}\bm\pi^\circ(\bm P_A^\circ)^u\dot{\bm P}_A^\circ((\bm P_A^\circ)^{r-u-1}(\bm P_B^\circ)^v-\bm1\bm\pi^\circ(\bm P_A^\circ)^r(\bm P_B^\circ)^v)\dot{\bm P}_B^\circ\bm1\nonumber\\ 
&&\qquad{}+\sum_{0\le u<v\le s-1}\bm\pi^\circ(\bm P_A^\circ)^r(\bm P_B^\circ)^u\dot{\bm P}_B^\circ((\bm P_B^\circ)^{v-u-1}-\bm1\bm\pi^\circ(\bm P_A^\circ)^r(\bm P_B^\circ)^v)\dot{\bm P}_B^\circ\bm1\\
&&\qquad{}+\sum_{u=0}^{r-1}\sum_{v=0}^{s-1}\bm\pi^\circ(\bm P_A^\circ)^u\dot{\bm P}_A^\circ(\bm P_A^\circ)^{r-u-1}(\bm P_B^\circ)^s(\bm Z^\circ-\bm1\bm\pi^\circ)(\bm P_A^\circ)^r(\bm P_B^\circ)^v\dot{\bm P}_B^\circ\bm1\\
&&\qquad{}+\sum_{u=0}^{s-1}\sum_{v=0}^{s-1}\bm\pi^\circ(\bm P_A^\circ)^r(\bm P_B^\circ)^u\dot{\bm P}_B^\circ(\bm P_B^\circ)^{s-u-1}(\bm Z^\circ-\bm1\bm\pi^\circ)(\bm P_A^\circ)^r(\bm P_B^\circ)^v\dot{\bm P}_B^\circ\bm1\bigg].
\end{eqnarray*}
Let $\{X_n^\circ\}_{n\ge0}$ be a temporally nonhomogeneous Markov chain in $\Sigma^\circ$ with one-step transition matrices $\bm P_A^\circ,\ldots,\bm P_A^\circ$ $(r\text{ times})$, $\bm P_B^\circ,\ldots,\bm P_B^\circ$ $(s\text{ times})$, $\bm P_A^\circ,\ldots,\bm P_A^\circ$ $(r\text{ times})$, $\bm P_B^\circ,\ldots,\bm P_B^\circ$ $(s\text{ times})$, and so on, and let the initial distribution be arbitrary.  For each $n\ge1$, define $\xi_n:=w^\circ(X_{n-1}^\circ,X_n^\circ)$ and $S_n:=\xi_1+\cdots+\xi_n$.  Then $\lim_{n\to\infty}n^{-1}S_n=\mu_{[r,s]}^\circ$ \emph{a.s.} and, if $(\sigma_{[r,s]}^\circ)^2>0$, then 
$$
\frac{S_n-n\mu_{[r,s]}^\circ}{\sqrt{n(\sigma_{[r,s]}^\circ)^2}}\to_d N(0,1) \text{ as } n\to\infty.
$$
\end{theorem}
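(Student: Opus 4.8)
The plan is to deduce Theorem~\ref{SLLN/CLT2} by applying Theorem~6 of Ref.~\cite{EL09}---the temporally nonhomogeneous, periodic counterpart of Theorem~1 of that reference, which was used to prove Theorem~\ref{SLLN/CLT}---to the chain $\{X_n^\circ\}$ in $\Sigma^\circ$ driven periodically by $\bm P_A^\circ$ ($r$ times) and then $\bm P_B^\circ$ ($s$ times). The point of the augmented state space $\Sigma^\circ=\Sigma\times\{-1,1\}$ is precisely that it makes the one-step payoff $\xi_n=w^\circ(X_{n-1}^\circ,X_n^\circ)$ a genuine function of the pair of consecutive states, as already noted; on $\Sigma$ alone a self-transition $\bm x\to\bm x$ could register a win or a loss. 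Before invoking the general theorem I would record two elementary facts. First, on every positive-probability transition of $\bm P_A^\circ$ or $\bm P_B^\circ$ the payoff $w^\circ$ equals $\pm1$, so $(w^\circ)^2=1$ there and the Hadamard squares reduce to the matrices themselves, $\ddot{\bm P}_A^\circ=\bm P_A^\circ$ and $\ddot{\bm P}_B^\circ=\bm P_B^\circ$; in particular $\ddot{\bm P}_A^\circ\bm1=\ddot{\bm P}_B^\circ\bm1=\bm1$. Second, game~$A$ is fair in the strong sense that, whatever the configuration, the randomly chosen player wins or loses one unit with probability $1/2$ each, so the expected one-step payoff under $\bm P_A^\circ$ is $0$ from \emph{every} state; that is, $\dot{\bm P}_A^\circ\bm1=\bm0$.

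Next I would check the hypotheses of Theorem~6 of Ref.~\cite{EL09}. It asks that the block matrix $(\bm P_A^\circ)^r(\bm P_B^\circ)^s$ and all its cyclic permutations be ergodic---exactly the standing assumption of Theorem~\ref{SLLN/CLT2}---so that each of the $r+s$ phases of the periodic chain has a well-defined stationary distribution and the block chain has a fundamental matrix $\bm Z^\circ$. The two inaccessible states $(\bm0,1)$ and $(\bm1,-1)$ are common to $\bm P_A^\circ$ and $\bm P_B^\circ$, carry zero $\bm\pi^\circ$-mass, and no state transitions into either of them, so they may be deleted without affecting the stationary distribution, the fundamental matrix, or any of the quantities appearing in the formulas, exactly as in the proof of Theorem~\ref{SLLN/CLT}. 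Theorem~6 then gives $\lim_{n\to\infty}n^{-1}S_n=\mu_{[r,s]}^\circ$ a.s.\ and, when $(\sigma_{[r,s]}^\circ)^2>0$, the asserted CLT, with $\mu_{[r,s]}^\circ$ and $(\sigma_{[r,s]}^\circ)^2$ given by the general formulas of that theorem: a time average over the $r+s$ phases of one-step mean payoffs, a time-averaged sum of within-period covariances, and an across-period term built from $\bm Z^\circ-\bm1\bm\pi^\circ$.

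It then remains to collapse those general formulas to the displayed ones, where the two facts above do the work. In the mean, the phases $0,\dots,r-1$ contribute $\bm\pi^\circ(\bm P_A^\circ)^u\dot{\bm P}_A^\circ\bm1=0$, leaving only the $s$ game-$B$ phases and hence the stated $\mu_{[r,s]}^\circ$. In the variance, the leading $1$ comes from $\ddot{\bm P}_A^\circ\bm1=\ddot{\bm P}_B^\circ\bm1=\bm1$; the squared-mean corrections at the game-$A$ phases vanish, leaving $\frac{1}{r+s}\sum_{v=0}^{s-1}(\bm\pi^\circ(\bm P_A^\circ)^r(\bm P_B^\circ)^v\dot{\bm P}_B^\circ\bm1)^2$; and in the covariance part every term whose later payoff-weighted factor is of type $A$ is sent to zero by $\dot{\bm P}_A^\circ\bm1=\bm0$, as is every $\bm1\bm\pi^\circ$-correction whose earlier factor is of type $A$. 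The survivors are exactly four families---the within-period $A$--$B$ cross terms $\bm\pi^\circ(\bm P_A^\circ)^u\dot{\bm P}_A^\circ(\bm P_A^\circ)^{r-u-1}(\bm P_B^\circ)^v\dot{\bm P}_B^\circ\bm1$, the within-period $B$--$B$ terms over $0\le u<v\le s-1$, and two across-period families ($A$--$B$ and $B$--$B$) carrying $\bm Z^\circ-\bm1\bm\pi^\circ$---which is precisely the displayed expression for $(\sigma_{[r,s]}^\circ)^2$.

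I expect the only genuine obstacle to be bookkeeping rather than anything conceptual: matching the compact four-line variance formula term by term to the general covariance expansion of Theorem~6 of Ref.~\cite{EL09}, keeping straight which products complete a period before an inserted $\bm Z^\circ-\bm1\bm\pi^\circ$ (namely $(\bm P_A^\circ)^{r-u-1}(\bm P_B^\circ)^s$ or $(\bm P_B^\circ)^{s-u-1}$) as opposed to which close a covariance within one period ($(\bm P_A^\circ)^{r-u-1}(\bm P_B^\circ)^v$ or $(\bm P_B^\circ)^{v-u-1}$), and checking that the surviving index ranges are $0\le u\le r-1$, $0\le v\le s-1$ for the $A$--$B$ sums and $0\le u<v\le s-1$ within a period, $0\le u,v\le s-1$ across periods, for the $B$--$B$ sums. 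Once that dictionary is fixed, each identity is a one-line substitution using $\dot{\bm P}_A^\circ\bm1=\bm0$ and $\ddot{\bm P}_A^\circ\bm1=\ddot{\bm P}_B^\circ\bm1=\bm1$.
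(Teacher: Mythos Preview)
Your proposal is correct and follows the same approach as the paper: the paper simply observes (in the paragraph preceding the theorem) that Theorem~6 of Ref.~\cite{EL09} applies once one passes to the augmented state space $\Sigma^\circ$, and then states the result without a separate proof environment. Your write-up is in fact more detailed than the paper's, since you spell out explicitly how the general mean and variance formulas of Theorem~6 collapse to the displayed ones via $\dot{\bm P}_A^\circ\bm1=\bm0$ and $\ddot{\bm P}_A^\circ\bm1=\ddot{\bm P}_B^\circ\bm1=\bm1$; the paper leaves this bookkeeping implicit.
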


Again there are simpler expressions for this mean and variance.  We let $\dot{\bm P}_B$ be obtained from $\bm P_B$ and $\dot{\bar{\bm P}}_B$ from $\bar{\bm P}_B$ by replacing each $q_m$ by $-q_m$ for $m=0,1,2,3,4$.  $\dot{\bm P}_A$ and $\dot{\bar{\bm P}}_A$ are the special case $p_0=p_1=p_2=p_3=p_4=1/2$.  We define $\mu_{[r,s]}$ and $\bar\mu_{[r,s]}$ in terms of $\bm\pi$, $\bm P_A$, and $\bm P_B$, and in terms of $\bar{\bm\pi}$, $\bar{\bm P}_A$, and $\bar{\bm P}_B$ in the same way that $\mu_{[r,s]}^\circ$ was defined in terms of $\bm\pi^\circ$, $\bm P_A^\circ$, and $\bm P_B^\circ$.  Finally, $\sigma_{[r,s]}^2$ and $\bar\sigma_{[r,s]}^2$ are defined analogously to $(\sigma_{[r,s]}^\circ)^2$.

\begin{theorem}
\begin{equation}\label{means-pattern}
\mu_{[r,s]}^\circ=\mu_{[r,s]}=\bar{\mu}_{[r,s]}
\end{equation}
and 
\begin{equation}\label{variances-pattern}
(\sigma_{[r,s]}^\circ)^2=\sigma_{[r,s]}^2=\bar{\sigma}_{[r,s]}^2.
\end{equation}
\end{theorem}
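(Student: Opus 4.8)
The plan is to reuse the proof of Theorem~\ref{means,variances} almost verbatim, the single matrix $\bm P$ there being replaced throughout by the pair $\bm P_A,\bm P_B$ (and correspondingly $\bm P_A^\circ,\bm P_B^\circ$ and $\bar{\bm P}_A,\bar{\bm P}_B$). \textbf{First} I would record the projection facts. Let $\phi(\bm x,s):=\bm x$. Because the right-hand sides of \eqref{x to x^ij,1}--\eqref{x to x,-1} do not depend on $s$, we have $\sum_t P_A^\circ((\bm x,s),(\bm y,t))=P_A(\bm x,\bm y)$ and $\sum_t P_B^\circ((\bm x,s),(\bm y,t))=P_B(\bm x,\bm y)$ for all $\bm x,\bm y,s$, and likewise, summing payoff-weighted transitions over $t$, $\sum_t\dot P_A^\circ((\bm x,s),(\bm y,t))=\dot P_A(\bm x,\bm y)$ and $\sum_t\dot P_B^\circ((\bm x,s),(\bm y,t))=\dot P_B(\bm x,\bm y)$; this last pair of identities is precisely the mechanism by which the ``replace each $q_m$ by $-q_m$'' rule of thumb encodes the payoff function $w^\circ$, exactly as exploited in \eqref{mean-eq}. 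Consequently the $\phi$-image of the chain driven by any finite word in $\bm P_A^\circ,\bm P_B^\circ$ is the chain driven by the same word in $\bm P_A,\bm P_B$; in particular the $\Sigma$-marginal of $\bm\pi^\circ$ is stationary for $(\bm P_A)^r(\bm P_B)^s$ and hence equals $\bm\pi$ by uniqueness, while $\bm1$ on $\Sigma^\circ$ is constant on $\phi$-fibres.

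\textbf{Next} I would establish $\mu_{[r,s]}^\circ=\mu_{[r,s]}$ and $(\sigma_{[r,s]}^\circ)^2=\sigma_{[r,s]}^2$. Writing each factor $\bm\pi^\circ(\bm P_A^\circ)^r(\bm P_B^\circ)^v\dot{\bm P}_B^\circ\bm1$ as an iterated sum and summing out the $\{-1,1\}$-coordinate of every intermediate state (it never influences the next transition) collapses it to $\bm\pi(\bm P_A)^r(\bm P_B)^v\dot{\bm P}_B\bm1$, which gives the mean. For the variance I would treat the terms of the formula for $(\sigma_{[r,s]}^\circ)^2$ group by group. Each non-$\bm Z^\circ$ group is a finite word in $\bm P_A^\circ,\bm P_B^\circ$ flanked by $\bm\pi^\circ$ and $\bm1$, with one or two factors replaced by dotted versions and possibly a rank-one piece $\bm1\bm\pi^\circ(\bm P_A^\circ)^r(\bm P_B^\circ)^v$ spliced in; in the spliced case one cuts the product at that $\bm1$ and $\bm\pi^\circ$, applies the projection to each half (using $\bm\pi^\circ\mapsto\bm\pi$ on the half that starts with $\bm\pi^\circ$), and recombines. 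The two groups containing $\bm Z^\circ-\bm1\bm\pi^\circ$ are handled with the Kemeny--Snell expansion $\bm Z^\circ-\bm1\bm\pi^\circ=\sum_{m\ge1}((\bm P^\circ)^{m-1}-\bm1\bm\pi^\circ)$, $\bm P^\circ=(\bm P_A^\circ)^r(\bm P_B^\circ)^s$ (see \cite[p.~75]{KS76}): each summand is again a word of the above type, reduces to the corresponding word in $\bm P_A,\bm P_B$, and resummation restores $\bm Z-\bm1\bm\pi$ with $\bm Z=(\bm I-(\bm P-\bm1\bm\pi))^{-1}$, $\bm P=(\bm P_A)^r(\bm P_B)^s$. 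Adding the reduced groups reproduces $\sigma_{[r,s]}^2$.

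\textbf{Finally}, for $\mu_{[r,s]}=\bar\mu_{[r,s]}$ and $\sigma_{[r,s]}^2=\bar\sigma_{[r,s]}^2$ I would quote Eq.~(9) of Ref.~\cite{EL12b} just as in the proof of Theorem~\ref{means,variances}. Since game $A$ is the special case $p_0=\cdots=p_4=1/2$ of game $B$, the matrices $\bm P_A$ and $\bm P_B$ are lumpable with respect to the same equivalence relation introduced in Section~\ref{MC}, so every identity of the form $\bm\pi\,\bm A_1\cdots\bm A_k\,\bm1=\bar{\bm\pi}\,\bar{\bm A}_1\cdots\bar{\bm A}_k\,\bm1$, with each $\bm A_i\in\{\bm P_A,\bm P_B,\dot{\bm P}_A,\dot{\bm P}_B\}$ and the word containing $(\bm P_A)^r(\bm P_B)^s$ as a cyclic factor so that $\bm\pi$ (resp.\ $\bar{\bm\pi}$) is the relevant stationary vector, follows from that equation; applying it to each (expanded) term of $\mu_{[r,s]}$ and of $\sigma_{[r,s]}^2$ produces the barred versions. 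I expect the only genuine difficulty to be bookkeeping: the variance formula has many terms, and one must verify that the rank-one corrections $\bm1\bm\pi^\circ(\bm P_A^\circ)^r(\bm P_B^\circ)^v$ and the inserted factors $\bm Z^\circ-\bm1\bm\pi^\circ$ really do pass through the projection $\phi$ and through the lumping map in the precise form in which they appear---no new idea beyond those already used for Theorem~\ref{means,variances} should be needed.
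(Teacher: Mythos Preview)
Your proposal is correct and follows essentially the same approach as the paper's own proof, which is extremely terse: it simply says that the first equalities in \eqref{means-pattern} and \eqref{variances-pattern} follow ``exactly as in \eqref{mean-eq}'' and ``in the same way as the first equation in \eqref{variances},'' while the second equalities use the result and method of Ref.~\cite{EL12b}. You have merely spelled out in greater detail what those references to Theorem~\ref{means,variances} entail---the projection of the $\{-1,1\}$ coordinate, the Kemeny--Snell expansion of $\bm Z^\circ-\bm1\bm\pi^\circ$, and the lumpability reduction via Eq.~(9) of Ref.~\cite{EL12b}---so there is no substantive divergence.
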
 

\begin{proof}
The first equation in \eqref{means-pattern} follows exactly as in \eqref{mean-eq}, while the second uses a result of Ref.~\cite{EL12b}.  The first equation in \eqref{variances-pattern} is proved in the same way as the first equation in \eqref{variances}, while the second uses the method of Ref.~\cite{EL12b}.
\end{proof}

We conclude this section with an application of the preceding SLLNs. Let us denote the means above by $\mu_B(\bm p)$, $\mu_{(\gamma,1-\gamma)}(\bm p)$, and $\mu_{[r,s]}(\bm p)$ to emphasize their dependence on the probability parameters.  The proof of the following is essentially the same as in Ref.~\cite{EL12a}.

\begin{corollary}\label{symm-corollary}
With $q_m:=1-p_m$ for $m=0,1,2,3,4$, we have
\begin{eqnarray*}
\mu_B(p_0,p_1,p_2,p_3,p_4)&=&-\mu_B(q_4,q_3,q_2,q_1,q_0),\\
\mu_{(\gamma,1-\gamma)}(p_0,p_1,p_2,p_3,p_4)&=&-\mu_{(\gamma,1-\gamma)}(q_4,q_3,q_2,q_1,q_0),\\
\mu_{[r,s]}(p_0,p_1,p_2,p_3,p_4)&=&-\mu_{[r,s]}(q_4,q_3,q_2,q_1,q_0).
\end{eqnarray*}
\end{corollary}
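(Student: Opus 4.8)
The plan is to exploit a winner/loser reversal symmetry of the dynamics. Define the bit-flip bijection $\phi\colon\Sigma\to\Sigma$ by $\phi(\bm x)_{i,j}:=1-x_{i,j}$, and lift it to a bijection $\phi^\circ$ of $\Sigma^\circ$ by $\phi^\circ(\bm x,s):=(\phi(\bm x),-s)$; note that $\phi^\circ$ is an involution and interchanges the two inaccessible states $(\bm 0,1)$ and $(\bm 1,-1)$. The elementary observation that drives everything is that flipping every entry turns each of the four neighbors of $(i,j)$ from a winner into a loser and vice versa, so $m_{i,j}(\phi(\bm x))=4-m_{i,j}(\bm x)$ for all $i,j$ and all $\bm x\in\Sigma$.

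Next I would verify a transition identity at the level of $\bm P^\circ$. Writing $\bm P^\circ_B(\bm p)$ for the matrix of \eqref{x to x^ij,1}--\eqref{x to x,-1} built from $\bm p=(p_0,\ldots,p_4)$, and setting $\tilde{\bm p}:=(q_4,q_3,q_2,q_1,q_0)$ with $q_m:=1-p_m$, the claim is
$$
P^\circ_B(\bm p)\big((\bm x,s),(\bm y,t)\big)=P^\circ_B(\tilde{\bm p})\big(\phi^\circ(\bm x,s),\phi^\circ(\bm y,t)\big),\qquad (\bm x,s),(\bm y,t)\in\Sigma^\circ,
$$
and likewise $w^\circ(\phi^\circ(\bm\xi),\phi^\circ(\bm\eta))=-w^\circ(\bm\xi,\bm\eta)$ for the payoff function. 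Granting $m_{i,j}(\phi(\bm x))=4-m_{i,j}(\bm x)$, the transition identity reduces to the numerical facts $\tilde p_{4-m}=q_m$, $\tilde q_{4-m}=p_m$, together with a two-case bookkeeping check: when $x_{i,j}=0$ the player at $(i,j)$ is a loser who wins with probability $p_{m_{i,j}(\bm x)}$ (bit $0\to1$), while in the flipped configuration $\phi(\bm x)_{i,j}=1$ so that player is a winner who, under $\tilde{\bm p}$, wins with probability $\tilde p_{4-m_{i,j}(\bm x)}=q_{m_{i,j}(\bm x)}$ and loses with probability $p_{m_{i,j}(\bm x)}$ (bit $1\to0$) --- so the bit transition is the $\phi$-image of the original and the recorded profit has the opposite sign; the case $x_{i,j}=1$ is symmetric. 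The self-loop entries \eqref{x to x,1}--\eqref{x to x,-1} match because the sum over $\{i,j:x_{i,j}=1\}$ turns into the sum over $\{i,j:\phi(\bm x)_{i,j}=0\}$, and conversely. Since game $A$ (all $p_m=1/2$) satisfies $\tilde{\bm p}=\bm p$, the matrix $\bm P^\circ_A$ is a fixed point of this reparametrization, i.e.\ $\phi^\circ$-conjugation sends $\bm P^\circ_A$ to itself.

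The conclusion for $\mu_B$ then follows from the SLLN of Theorem~\ref{SLLN/CLT}: run $\{X^\circ_n\}$ with transition matrix $\bm P^\circ_B(\bm p)$ and arbitrary initial distribution, and set $Y^\circ_n:=\phi^\circ(X^\circ_n)$; by the transition identity $\{Y^\circ_n\}$ is a Markov chain with transition matrix $\bm P^\circ_B(\tilde{\bm p})$, and $w^\circ(Y^\circ_{k-1},Y^\circ_k)=-w^\circ(X^\circ_{k-1},X^\circ_k)$, so its partial sums are $-S_n$. Applying the SLLN to each chain and equating the almost-sure limits gives $\mu_B(\tilde{\bm p})=-\mu_B(\bm p)$, which is the first identity (after invoking Theorem~\ref{means,variances} to pass between $\mu^\circ$ and $\mu$). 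For $C:=\gamma A+(1-\gamma)B$, the same coupling is applied to $\bm P^\circ_{(\gamma,1-\gamma)}$, using that the reparametrization commutes with mixing: if $\bm p'$ is the mixture parameter of $C$ built from $\bm p$, then $(q_4',q_3',q_2',q_1',q_0')$ with $q_m':=1-p_m'$ is exactly the mixture parameter built from $(q_4,q_3,q_2,q_1,q_0)$, since game $A$ is unchanged. For $C:=A^rB^s$ one uses the SLLN of Theorem~\ref{SLLN/CLT2} instead: $\phi^\circ$-conjugation turns the temporally nonhomogeneous chain built from $\bm P^\circ_A,\bm P^\circ_B(\bm p)$ into the one built from $\bm P^\circ_A,\bm P^\circ_B(\tilde{\bm p})$ (using that $\bm P^\circ_A$ is a fixed point), again with all payoffs negated; conjugation by the fixed permutation of $\phi^\circ$ preserves products, hence sends $(\bm P^\circ_A)^r(\bm P^\circ_B(\bm p))^s$ and its cyclic permutations to $(\bm P^\circ_A)^r(\bm P^\circ_B(\tilde{\bm p}))^s$ and its cyclic permutations, so ergodicity holds for one system iff it holds for the other, and equating almost-sure limits yields the third identity.

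The only genuine work is the two-case verification of the transition identity on $\Sigma^\circ$ --- in particular getting the self-loop rows \eqref{x to x,1}--\eqref{x to x,-1} and the sign flip $s\mapsto-s$ exactly right; everything afterward is formal. I do not anticipate a real obstacle, consistent with the remark that the argument is essentially that of Ref.~\cite{EL12a}.
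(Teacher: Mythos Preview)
Your proposal is correct and follows essentially the same approach the paper indicates (it defers to Ref.~\cite{EL12a}): the bit-flip involution $\phi$ implements the winner/loser reversal, the identity $m_{i,j}(\phi(\bm x))=4-m_{i,j}(\bm x)$ together with $\tilde p_{4-m}=q_m$ yields the conjugacy of transition matrices and the sign reversal of the payoff, and the SLLNs of Theorems~\ref{SLLN/CLT} and~\ref{SLLN/CLT2} then transfer the almost-sure limits. Your lift to $\Sigma^\circ$ via $\phi^\circ(\bm x,s)=(\phi(\bm x),-s)$ is a clean way to carry the payoff sign through, and your checks of the self-loop rows, the mixture compatibility $\widetilde{\bm p'}=(\tilde{\bm p})'$, and the preservation of ergodicity under conjugation for the $A^rB^s$ case are all in order.
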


This result shows that the Parrondo region ($\mu_B\le0$ and $\mu_{(\gamma,1-\gamma)}>0$, or $\mu_B\le0$ and $\mu_{[r,s]}>0$) and the anti-Parrondo region ($\mu_B\ge0$ and $\mu_{(\gamma,1-\gamma)}<0$, or $\mu_B\ge0$ and $\mu_{[r,s]}<0$) are images of each other under the affine transformation $(p_0,p_1,p_2,p_3,p_4)\mapsto(q_4,q_3,q_2,q_1,q_0)$, hence have equal volumes.

\section{Reducible cases}\label{reducible}

We have assumed that $0<p_m<1$ for $m=0,1,2,3,4$, which ensures that our Markov chain is irreducible and aperiodic, but this assumption can be weakened.  Let us continue to assume that $0<p_m<1$ for $m=1,2,3$ but not necessarily for $m=0$ or $m=4$.   We denote by $\bm0\in\Sigma$ the state consisting of all 0s, and by $\bm1\in\Sigma$ the state consisting of all 1s.\medskip

\begin{enumerate}
\item Suppose $p_0=1$ and $0<p_4<1$.  Then state $\bm 0$ cannot be reached from $\Sigma-\{\bm 0\}$ and $\bm P$, with row $\bm 0$ and column $\bm 0$ deleted, is a stochastic matrix that is irreducible and aperiodic.

\item Suppose $p_0=0$ and $0\le p_4<1$.  Then state $\bm 0$ is absorbing, and absorption eventually occurs with probability 1.  Hence $S_n-S_{n-1}=-1$ for all $n$ sufficiently large, so $\mu_B=-1$.

\item Suppose $0<p_0<1$ and $p_4=0$.  This is analogous to case 1, with $\bm1$ in place of $\bm0$.

\item Suppose $0<p_0\le 1$ and $p_4=1$.  This is analogous to case 2, with $\bm1$ in place of $\bm0$ and 1 in place of $-1$.

\item Suppose $p_0=1$ and $p_4=0$.  Then states $\bm 0$ and $\bm 1$ cannot be reached from $\Sigma-\{\bm 0,\bm 1\}$ and $\bm P$, with rows $\bm 0$ and $\bm 1$ and columns $\bm 0$ and $\bm 1$ deleted, is a stochastic matrix.  If $M$ or $N$ is odd, it appears that irreducibility and aperiodicity hold, but we do not have a proof.  If $M$ and $N$ are even, then the two states having a checkerboard pattern of 0s and 1s are absorbing while all other states are transient.  From  either absorbing state there is a win of one unit with probability 1/2 and a loss of one unit with probability 1/2.  Consequently, $\mu_B=0$, regardless of $p_1$, $p_2$, and $p_3$.  For $p_1,p_2,p_3>1/2$, we expect that the parameter vector $(1,p_1,p_2,p_3,0)$ belongs to the Parrondo region (the $(1/2,1/2)$ random mixture version), but we do not have a proof.

\item Suppose $p_0=0$ and $p_4=1$.  Then both $\bm 0$ and $\bm 1$ are absorbing, and absorption occurs with probability 1.  The probability of absorption at $\bm 1$ depends on the initial state (or equivalence class), and can be calculated for small $M,N$.  Of course, $\mu_B$ is undefined in this case.
\end{enumerate}

A more complete analysis would also allow $p_1$, $p_2$, and $p_3$ to be 0 or 1, but conditions for irreducibility would be quite complicated, so we do not pursue it.  However, we do allow $p_1$, $p_2$, and $p_3$ to be 0 or 1 in the examples below.

\section{Monotonicity of the mean function}\label{mono-Section}

In game $B$, one would expect that, if one of the five coins were replaced by a coin with a higher probability of heads, the mean profit at equilibrium would increase.  Paradoxically, as we will see, this is false in general.  However, it is true under the extra assumption that the probability of heads is monotone nondecreasing in the number of winners, that is, $p_0\le p_1\le p_2\le p_3\le p_4$.

\begin{theorem}\label{monotonicity}
The mean profit function $\mu_B(\bm p)$ is monotone nondecreasing in each variable $p_0,p_1,p_2,p_3,p_4$ on the subset of the parameter space $[0,1]^5$ on which 
\begin{equation}\label{mono}
p_0\le p_1\le p_2\le p_3\le p_4,
\end{equation}
excluding only the case, $p_0=0$ and $p_4=1$, in which $\mu_B(\bm p)$ is undefined.
\end{theorem}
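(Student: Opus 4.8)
The plan is to reformulate $\mu_B(\bm p)$ as an average against the stationary law and to exploit the fact that, under \eqref{mono}, game $B$ is an \emph{attractive} (monotone) spin system, on which a monotone coupling can be run. First I would record the identity $\mu_B(\bm p)=\E_{\bm\pi_B}[g_{\bm p}]$, where $g_{\bm p}(\bm x):=(MN)^{-1}\sum_{i,j}(2p_{m_{i,j}(\bm x)}-1)$ is the conditional expected one-step profit given the current configuration $\bm x$: for a uniformly chosen site $(i,j)$ the realized profit is $+1$ with probability $p_{m_{i,j}(\bm x)}$ and $-1$ otherwise, whatever $x_{i,j}$ is. This follows from $\mu_B=\mu=\bm\pi_B\dot{\bm P}_B\bm1$ (Theorem \ref{means,variances}) once one checks, by collecting the relevant terms of \eqref{x to x^ij}--\eqref{x to x} after the sign change $q_m\mapsto-q_m$, that the row sums of $\dot{\bm P}_B$ are exactly $g_{\bm p}$.

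The structural point is that the natural graphical construction of the game-$B$ chain is monotone when \eqref{mono} holds. At each step pick a site $(i,j)$ uniformly and an independent $U$ uniform on $(0,1)$; the configuration is unchanged except that the value at $(i,j)$ becomes $1$ if $U<p_{m_{i,j}(\bm x)}$ and $0$ otherwise. If $\bm x\le\bm y$ coordinatewise then $m_{i,j}(\bm x)\le m_{i,j}(\bm y)$, so $p_0\le\cdots\le p_4$ forces $p_{m_{i,j}(\bm x)}\le p_{m_{i,j}(\bm y)}$ and the coupled update preserves the order. Running two chains with parameter vectors $\bm p\le\bm p'$, both obeying \eqref{mono}, off the \emph{same} $(i,j)$ and $U$, the chain of inequalities $p_{m_{i,j}(\bm x)}\le p_{m_{i,j}(\bm y)}\le p'_{m_{i,j}(\bm y)}$ (valid whenever $\bm x\le\bm y$) shows that $X_0\le X_0'$ implies $X_n\le X_n'$ for all $n$.

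Suppose first $0<p_m<1$ for every $m$, so the chain is ergodic. Start both chains from the bottom state $\bm 0$; the coupling gives $X_n\le X_n'$ a.s., and letting $n\to\infty$ (using $\mathrm{Law}(X_n)\to\bm\pi_B(\bm p)$ and $\mathrm{Law}(X_n')\to\bm\pi_B(\bm p')$) yields $\E_{\bm\pi_B(\bm p)}[f]\le\E_{\bm\pi_B(\bm p')}[f]$ for every nondecreasing $f$ on $\Sigma$; that is, $\bm\pi_B(\bm p)$ is stochastically dominated by $\bm\pi_B(\bm p')$. Since $g_{\bm p}$ is itself nondecreasing in $\bm x$ under \eqref{mono} (same computation) and $g_{\bm p}\le g_{\bm p'}$ pointwise because $p_m\le p_m'$, writing $\bm\pi:=\bm\pi_B(\bm p)$ and $\bm\pi':=\bm\pi_B(\bm p')$ we obtain
\[
\mu_B(\bm p)=\E_{\bm\pi}[g_{\bm p}]\le\E_{\bm\pi}[g_{\bm p'}]\le\E_{\bm\pi'}[g_{\bm p'}]=\mu_B(\bm p').
\]
Applied to $\bm p'=\bm p+\eps\,e_m$ with $\eps$ small enough to remain in \eqref{mono}, this is the claimed monotonicity in each coordinate; in fact the argument gives monotonicity with respect to the full product order on the region.

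The remaining, and only delicate, point is the reducible part of the region, where some of $p_0,p_4$ equals $0$ or $1$. The coupling itself is unaffected; what must be verified, case by case following Section \ref{reducible}, is that from $\bm 0$ the Cesàro average of the realized profit still converges a.s.\ to $\mu_B(\bm p)$, so that $\E_{\bm\pi_B(\bm p)}[g_{\bm p}]$ may be replaced by $\lim_n n^{-1}\sum_{k=1}^n\E[g_{\bm p}(X_{k-1})]$ and the estimate rerun. When $p_0=1$ or $p_4=1$ one works with the chain obtained by deleting the transient state(s) $\bm 0$ and/or $\bm 1$; when $p_0=0$ with $p_4<1$ one has $\mu_B=-1$, the minimum possible, so monotonicity holds trivially there, and dually $\mu_B=1$ when $p_4=1$ with $p_0>0$; when $p_0=1,\,p_4=0$ with $M,N$ even, $\mu_B=0$ and $g_{\bm p}$ integrates to $0$ against every stationary law (both are mixtures of the two absorbing checkerboard states), so the estimate again goes through. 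The case $p_0=1,\,p_4=0$ with $M$ or $N$ odd leans on the irreducibility of the chain on $\Sigma\setminus\{\bm 0,\bm 1\}$, which Section \ref{reducible} observes but does not prove; alternatively the whole boundary can be dispatched by showing $\mu_B$ is continuous on the region (stationary distributions vary continuously with the transition matrix when the limit chain is uniquely ergodic, and in the even checkerboard case the payoff integrates to $0$ against every stationary law) and then passing to the limit from the dense interior. I expect this bookkeeping, rather than the two-line coupling, to be the main obstacle.
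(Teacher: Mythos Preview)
Your coupling is exactly the one the paper uses, but the paper extracts the conclusion more directly. Rather than passing through stochastic domination of stationary laws and the representation $\mu_B=\E_{\bm\pi_B}[g_{\bm p}]$, the paper compares the \emph{realized} cumulative profits pathwise: with both chains driven by the same $(I_k,J_k,U_k)$ and started from the same state, the per-step profit $2\cdot 1\{U_k\le p_{m_{I_k,J_k}(\bm X(k-1))}\}-1$ is dominated by its primed counterpart at every step (this uses only monotonicity of $\bm p$, not of $\bm p'$), so $S_n\le S_n'$ for all $n$; dividing by $n$ and invoking the SLLN finishes the proof in one line. This buys two things. First, no separate treatment of the boundary is needed beyond the existence of $\mu_B$ there: the inequality $S_n\le S_n'$ is pathwise and survives taking limits whenever those limits exist. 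Second, much of your boundary bookkeeping is aimed at cases that cannot occur: $p_0=1,\,p_4=0$ violates $p_0\le p_4$, so the checkerboard absorbing states and the unproved irreducibility on $\Sigma\setminus\{\bm0,\bm1\}$ are simply irrelevant on the monotone region. Under \eqref{mono}, excluding $p_0=0,\,p_4=1$, the only non-ergodic possibilities are $p_0=0,\,p_4<1$ (absorption at $\bm0$, $\mu_B=-1$) and $p_0>0,\,p_4=1$ (absorption at $\bm1$, $\mu_B=+1$), both trivial for monotonicity. Your route is correct, but the detour through $\bm\pi_B$---which also forces you to assume \eqref{mono} for $\bm p'$ so that $g_{\bm p'}$ is monotone---is avoidable.
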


\begin{proof}
Given the parameter vector $\bm p=(p_0,p_1,p_2,p_3,p_4)\in[0,1]^5$ and the initial state $\bm x(0)$, we can simulate our Markov chain, based on three independent sequences of i.i.d.\ random variables, $I_1,I_2,\ldots$ uniform on $\{1,2,\ldots,M\}$, $J_1,J_2,\ldots$ uniform on $\{1,2,\ldots,N\}$, and $U_1,U_2,\ldots$ uniform on $(0,1)$.  The interpretation is that $(I_k,J_k)$ is the site of the player chosen to play at round $k$ and $U_k$ determines whether a win or a loss occurs.  More precisely, we define $\bm X(k)$ recursively in terms of $\bm X(k-1)$, $I_k$, $J_k$, and $U_k$.  Specifically, 
$$
X_{i,j}(k)=\begin{cases}X_{i,j}(k-1)&\text{if $(i,j)\ne (I_k,J_k)$},\cr
1\{U_k\le p_{m_{i,j}(\bm X(k-1))}\}&\text{if $(i,j)=(I_k,J_k)$}.
\end{cases}
$$
Then, letting 
$$
S_n=\sum_{k=1}^n [2\cdot 1\{U_k\le p_{m_{I_k,J_k}(\bm X(k-1))}\}-1],
$$
we see that $S_n$ is the players' cumulative profit after $n$ rounds, hence
$\mu_B(\bm p)=\lim_{n\to\infty} n^{-1}S_n$  a.s.  (We exclude only the case, $p_0=0$ and $p_4=1$, in which $\mu_B(\bm p)$ is undefined.)

Next, let us couple two such processes with the same starting point but different parameter vectors, $\bm p=(p_0,p_1,p_2,p_3,p_4)$ and $\bm p'=(p_0',p_1',p_2',p_3',p_4')$, using the same $I_1,I_2,\ldots$, $J_1,J_2,\ldots$, and $U_1,U_2,\ldots$ sequences.  However, let us assume that the first of the two parameter vectors is monotone, that is, \eqref{mono} holds.  We also assume that
$$
p_0\le p_0',\quad p_1\le p_1',\quad p_2\le p_2',\quad p_3\le p_3',\quad p_4\le p_4'.
$$
We now define the $\bm X$ process and the $\bm X'$ process recursively as before, and we find that $S_n\le S_n'$ for all $n\ge0$, hence
$$
\mu_B(\bm p)=\lim_{n\to\infty}n^{-1}S_n\le\lim_{n\to\infty}n^{-1}S_n'=\mu_B(\bm p').
$$
This completes the proof.
\end{proof}

For an example in which the mean profit function $\mu_B(\bm p)$ fails to be nondecreasing, consider the case $M=N=3$ and
$$
\mu_B(1,0,9/10,1/2,1/2)\approx0.0554176,\quad \mu_B(1,0,1,1/2,1/2)=-1/3.
$$
More generally, the mean function $\mu_B(1,0,p_2,1/2,1/2)$ is not nondecreasing; indeed, its graph is displayed in Figure \ref{graph} with that of $\mu_{(1/2,1/2)}(1,0,p_2,1/2,1/2)$.

\begin{figure}[htb]
\centering
\includegraphics[width = 3in]{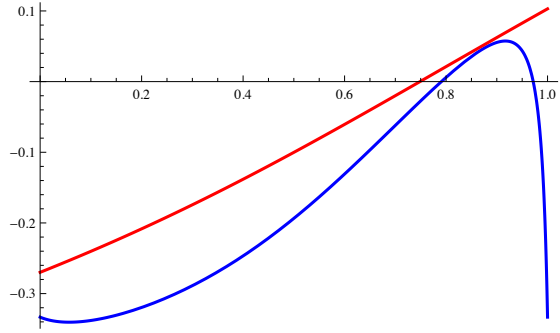}
\caption{\label{graph}The graphs of the mean profit functions $\mu_B(1,0,p_2,1/2,1/2)$ (blue) and $\mu_{(1/2,1/2)}(1,0,p_2,1/2,1/2)$ (red) as functions of $p_2\in[0,1]$.  Notice that the function $\mu_B(1,0,p_2,1/2,1/2)$ is decreasing on $[0.917,1]$.}
\end{figure}

To better understand why $\mu_B(1,0,p_2,1/2,1/2)$ is decreasing in $p_2$ for $0.917\le p_2\le 1$, it may help to rewrite $\mu_B(\bm p)$ as
\begin{eqnarray*}
\mu_B(\bm p)&=&\bm\pi\dot{\bm P}_B\bm1=(MN)^{-1}\sum_{i=1}^M\sum_{j=1}^N\sum_{\bm x}\pi(\bm x)(2p_{m_{i,j}(\bm x)}-1)\\
&=&\sum_{\bm x}\pi(\bm x)(2p_{m_{2,2}(\bm x)}-1)=\sum_{m=0}^4\lambda_m(\bm p)(2p_m-1),
\end{eqnarray*}
where $\lambda_m(\bm p):=\sum_{\bm x: m_{2,2}(\bm x)=m}\pi(\bm x)$ is the distribution of $m_{2,2}$ at equilibrium.  Thus, $\mu_B(\bm p)$ is a weighted average of the means $2p_m-1$ ($m=0,1,2,3,4$), but of course the weights $\lambda_m(\bm p)$ are functions of $\bm p$.  In our example, we can graph the weights as functions of $p_2\in[0,1]$ with the results shown in Figure \ref{weights}.

\begin{figure}[htb]
\centering
\includegraphics[width = 3.3in]{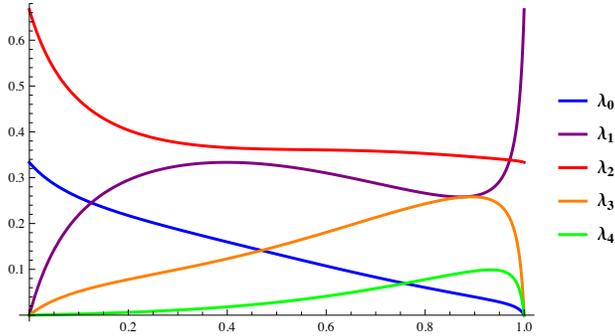}
\caption{\label{weights}The graphs of $\lambda_m(1,0,p_2,1/2,1/2)$ for $m=0,1,2,3,4$ as functions of $p_2\in[0,1]$.  To distinguish the curves without relying on color, at $\bm p=(1,0,3/5,1/2,1/2)$ we have $\lambda_4(\bm p)<\lambda_0(\bm p)<\lambda_3(\bm p)<\lambda_1(\bm p)<\lambda_2(\bm p)$.}
\end{figure}

The dramatic changes in the weights $\lambda_m(1,0,p_2,1/2,1/2)$ as $p_2$ increases from 9/10 to 1 is a consequence of the fact that the Markov chain is irreducible and aperiodic for $0<p_2<1$ but absorbing states appear at $p_2=1$.  Indeed, all six states with one row or column containing all 1s and the remaining six entries being 0s are absorbing because $p_1=0$ and $p_2=1$.  The weights become $(0,2/3,1/3,0,0)$, hence $\mu_B(1,0,1,1/2,1/2)=(2/3)(-1)+(1/3)(1)=-1/3$.  Incidentally, a similar phenomenon occurs at $p_2=0$, where the weights are $(1/3,0,2/3,0,0)$ and $\mu_B(1,0,0,1/2,1/2)=(1/3)(1)+(2/3)(-1)=-1/3$.

Examples of the nonmonotonicity of $\mu_B(\bm p)$ appear not just when $M=N=3$.  In the case $M=N=4$, we have a similar phenomenon when $(p_0,p_1,p_3,p_4)=(3/4,1,1,1/4)$, namely that $\mu_B(3/4,1,p_2,1,1/4)$ is decreasing in $p_2$ for $p_2$ large enough.  Here the effect is less dramatic, perhaps because there are no absorbing states when $p_2=1$.

On the other hand, in the one-dimensional spatial model with $N\ge3$ players and probability parameters $p_0$, $p_1$, and $p_2$, extensive computations suggest that monotonicity always holds.  In the simplest case, $N=3$, this can be proved algebraically.  With $p_0\le p_1\le p_2$ in place of \eqref{mono}, the analogue of Theorem \ref{monotonicity} holds.  But a general proof remains elusive.

\section{Convergence of mean profit}\label{convergence}

In the one-dimensional case we found sufficient conditions for the mean profits $\mu_B$ and $\mu_{(1/2,1/2)}$ to converge as $N\to\infty$.  In that case there were only three parameters, $p_0,p_1,p_2\in[0,1]$ (assuming that the coin tossed depends only on the number of winners among the nearest neighbors), and we found that $\mu_{(1/2,1/2)}$ always converges, whereas $\mu_B$ converges on a subset of the parameter space with volume $3323/4032\approx0.824$.  We used four methods to get this result, the basic estimate, attractiveness and repulsiveness, coalescing duality, and annihilating duality.  For example, the latter leads to the sufficient condition
\begin{equation*}
p_0,p_1,p_2\in(2\overline{p}-1,2\overline{p})\cap(0,1),\quad \overline{p}:=(p_0+2p_1+p_2)/4,
\end{equation*}
and the set of such $(p_0,p_1,p_2)\in(0,1)^3$ is a region of volume 2/3.  In general, the limit can be described in terms of an ergodic spin system.  

Analogous methods apply in the two-dimensional setting with $M,N\ge3$, but unfortunately the results obtained are less satisfactory.  For this reason we do not provide complete details but simply state the sufficient conditions for convergence of $\mu_B$.  Let a parameter vector $\bm p=(p_0,p_1,p_2,p_3,p_4)\in[0,1]^5$ be given.  Of the four methods mentioned above, only the first and last lead to useful results.  The basic estimate leads to the sufficient condition
\begin{equation}\label{basic}
\max_{0\le m\le3}|p_{m+1}-p_m|<\frac{1}{4},
\end{equation}
and annihilating duality leads to the sufficient condition
\begin{eqnarray}\label{annihilating-2d}
&&|p_0 + 4 p_1 + 6 p_2 + 4 p_3 + p_4 - 8| + 4 |p_0 + 2 p_1 - 2 p_3 - p_4|+ 6 |p_0 - 2 p_2 + p_4|\nonumber\\
&&\quad{}  + 4 |p_0 - 2 p_1 + 2 p_3 - p_4|+ |p_0 - 4 p_1 + 6 p_2 - 4 p_3 + p_4|<8.
\end{eqnarray}
In both cases $\mu_B$ converges either as $M,N\to\infty$ or, for fixed $M\ge3$, as $N\to\infty$.  It follows that $\mu_{(1/2,1/2)}$ converges at $\bm p=(p_0,p_1,p_2,p_3,p_4)$ if $\mu_B$ converges at $\bm p'=(p_0',p_1',p_2',p_3',p_4')$, where $p_m':=(1/2+p_m)/2$ for $m=0,1,2,3,4$.  

\begin{theorem}
The mean profit $\mu_B$ converges as $M,N\to\infty$ provided the spin system in $\{0,1\}^{{\bf Z}^2}$ with flip rates 
$$
c_{i,j}(\bm x):=\begin{cases}p_{m_{i,j}(\bm x)}&\text{if $x_{i,j}=0$,}\\q_{m_{i,j}(\bm x)}&\text{if $x_{i,j}=1$,}\end{cases}
$$
is ergodic.  A sufficient condition for ergodicity is that \eqref{basic} or \eqref{annihilating-2d} holds.
\end{theorem}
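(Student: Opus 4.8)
The plan is to realize the finite-array game-$B$ chains as discretizations of finite-volume spin systems, identify the limiting mean by a compactness argument, and then handle the two sufficient conditions separately, via the basic estimate and via annihilating duality, exactly as in the one-dimensional case (Ref.~\cite{EL12a}). First I would reduce the convergence claim to a statement about finite-volume stationary measures. Write $\mu_B^{(M,N)}$ for the mean profit on the $M\times N$ array; the computation in Section~\ref{mono-Section} gives $\mu_B^{(M,N)}=\int g\,d\pi^{(M,N)}$, where $g(\bm x):=2p_{m_{0,0}(\bm x)}-1$ is a local function (depending on $\bm x$ only through the four neighbors of a fixed site) and $\pi^{(M,N)}$ is the stationary distribution of the game-$B$ chain on the torus ${\bf Z}_M\times{\bf Z}_N$. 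Since the one-step matrix of that chain is $\bm I+(MN)^{-1}\bm Q^{(M,N)}$, where $\bm Q^{(M,N)}$ is the generator of the finite spin system with the stated flip rates on the torus, $\pi^{(M,N)}$ is also the stationary measure of that spin system. Regard each $\pi^{(M,N)}$, periodically extended, as a translation-invariant probability measure on $\{0,1\}^{{\bf Z}^2}$. It then suffices to show $\pi^{(M,N)}\Rightarrow\nu$ as $M,N\to\infty$, with $\nu$ the unique stationary measure of the ergodic spin system on ${\bf Z}^2$; boundedness and continuity of $g$ then give $\mu_B^{(M,N)}\to\int g\,d\nu$.

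The weak convergence is a soft argument. By compactness of $\{0,1\}^{{\bf Z}^2}$ in the product topology, $\{\pi^{(M,N)}\}$ has subsequential weak limits; let $\rho$ be one, along some sequence with $M,N\to\infty$. For a local function $f$ and all $M,N$ large enough that the support of $f$ together with its nearest-neighbor enlargement does not wrap on the torus, the finite generator $\bm Q^{(M,N)}$ agrees with the infinite-volume generator $\mathcal L$ on $f$, so stationarity gives $\int\mathcal Lf\,d\pi^{(M,N)}=0$; since $\mathcal Lf$ is again local, hence continuous and bounded, letting $M,N\to\infty$ along the sequence yields $\int\mathcal Lf\,d\rho=0$. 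Local functions form a core for $\mathcal L$, so $\rho$ is invariant, and ergodicity forces $\rho=\nu$. As every subsequential limit is $\nu$, the full family converges to $\nu$, which proves the first assertion. (The case of fixed $M$ with $N\to\infty$ is identical, with ${\bf Z}_M\times{\bf Z}$ in place of ${\bf Z}^2$.)

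It remains to verify that each of \eqref{basic} and \eqref{annihilating-2d} implies ergodicity. For \eqref{basic} I would apply the basic estimate (the $M<\eps$ coupling criterion of Ref.~\cite{EL12a}): flipping the spin at a site leaves the number of winners among its neighbors unchanged, so $c_{i,j}(\bm x)+c_{i,j}(\bm x^{i,j})\equiv1$, giving $\eps=1$; and $c_{i,j}(\bm x)$ depends only on the four neighbors of $(i,j)$, each through a single unit change of $m_{i,j}(\bm x)$, so the total variation of the rates satisfies $M\le4\max_{0\le m\le3}|p_{m+1}-p_m|$. Thus \eqref{basic} is precisely $M<\eps$, and ergodicity (with exponential rate) follows. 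For \eqref{annihilating-2d} I would use annihilating duality as in Refs.~\cite{EL12a,EL12b}. Put $\sigma_{k,l}:=1-2x_{k,l}$; by symmetry in the four neighbors, $p_{m_{i,j}(\bm x)}=\sum_{k=0}^{4}a_k\sum_A\prod_{(u,v)\in A}\sigma_{u,v}$, the inner sum over the $k$-element subsets $A$ of the four neighbors of $(i,j)$, with $16a_0=p_0+4p_1+6p_2+4p_3+p_4$, $16a_1=p_0+2p_1-2p_3-p_4$, $16a_2=p_0-2p_2+p_4$, $16a_3=p_0-2p_1+2p_3-p_4$, $16a_4=p_0-4p_1+6p_2-4p_3+p_4$, so that $c_{i,j}(\bm x)=\tfrac12+(a_0-\tfrac12)\sigma_{i,j}+\sum_{k\ge1}a_k\sum_A\sigma_{i,j}\prod_{(u,v)\in A}\sigma_{u,v}$. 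Feeding this into the action of $\mathcal L$ on the duality functions $\bm x\mapsto\prod_{(k,l)\in F}\sigma_{k,l}$ ($F$ a finite subset of ${\bf Z}^2$), and using the parity identity $\prod_A\sigma\cdot\prod_B\sigma=\prod_{A\triangle B}\sigma$ to keep all coefficients nonnegative, exhibits the spin system as dual to a process on finite subsets of ${\bf Z}^2$ in which each particle, at rate $1$, is either removed or replaced by the particles occupying a subset $A$ of its nearest neighbors (annihilating on contact), the total rate of the nontrivial moves per particle being $|2a_0-1|+8|a_1|+12|a_2|+8|a_3|+2|a_4|$, i.e.\ the left side of \eqref{annihilating-2d} divided by $8$. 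Under \eqref{annihilating-2d} this rate is $<1$, so the dual is a genuine (sub-Markovian) process; by recurrence of planar random walk the dual started from any finite set collapses to at most one particle, and hence $\E_{\bm x}\big[\prod_{(k,l)\in F}\sigma_{k,l}(t)\big]$ converges, as $t\to\infty$, to a limit independent of $\bm x$ for every finite $F$ --- which is ergodicity.

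The main obstacle is this last step --- the passage from \eqref{annihilating-2d} to ergodicity. Getting the annihilating dual exactly right (the parity re-expression rendering all dual rates nonnegative; the bookkeeping of the missing ``mass'' so that \eqref{annihilating-2d} is the sharp condition for the dual to be well posed; and controlling the branching so that the dual does collapse), and then extracting convergence of $\E_{\bm x}[\prod_F\sigma(t)]$ and hence ergodicity, is delicate; the one-dimensional arguments in Refs.~\cite{EL12a,EL12b} are the template, though --- as the paper emphasizes --- the resulting two-dimensional conditions come out rather restrictive. By contrast, the reduction in the first two paragraphs and the verification of \eqref{basic} are routine.
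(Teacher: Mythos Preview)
The paper does not actually prove this theorem: it says explicitly ``we do not provide complete details but simply state the sufficient conditions,'' deferring to the one-dimensional analysis of Refs.~\cite{EL13a,EL13b} as the template. Your sketch is precisely that template, so there is nothing to compare against; rather, you have supplied what the paper omits. The reduction via periodically extended finite-volume stationary measures and the compactness/uniqueness argument in your first two paragraphs are standard and correct, as is your identification $\bm P=\bm I+(MN)^{-1}\bm Q^{(M,N)}$ linking the discrete chain to the finite-torus spin system. Your verification of \eqref{basic} via the $M<\eps$ criterion, with $\eps=\inf_x(c(x,\eta)+c(x,\eta^x))=1$ and $M\le4\max_{0\le m\le3}|p_{m+1}-p_m|$, is exactly right.

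For \eqref{annihilating-2d}, your coefficient expansion is correct and indeed recovers the stated inequality as the condition for a well-defined annihilating dual. The one step I would flag is the sentence ``by recurrence of planar random walk the dual started from any finite set collapses to at most one particle.'' The dual here is a \emph{branching} annihilating system---a particle at $(i,j)$ can be replaced by up to four neighboring particles---and recurrence of nearest-neighbor walk on ${\bf Z}^2$ does not by itself control such a system; parity is not preserved and the particle count can grow. In the one-dimensional papers the ergodicity conclusion is not drawn from a collapse-by-recurrence argument but from the duality relation directly, together with the observation that the strict inequality leaves a positive residual rate playing the role of spontaneous annihilation; one then shows the dual hits $\varnothing$ almost surely (or that the duality functional tends to a constant) without appealing to random-walk recurrence per se. You correctly identify this as the delicate step, but the heuristic you offer for it is not the one that actually works; the paper's own caveat that the two-dimensional conditions are ``rather restrictive'' reflects exactly the difficulty of pushing the one-dimensional duality machinery through here.
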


Condition \eqref{basic} is satisfied on a subset of the parameter space with volume 213/5120, or about 4.2\% of the time.  By simulation, condition \eqref{annihilating-2d} is satisfied about 8.8\% of the time.  At least one of the two conditions holds about 10.6\% of the time.

The corresponding conditions for game $C:=(1/2)(A+B)$ are as follows: the basic inequality holds on a set with volume 169/480 (about 35.2\% of the time);  annihilating duality applies about 74.8\% of the time; and at least one of the two conditions applies about 76.0\% of the time.

\section{The Parrondo region}\label{parrondo}

The Parrondo and anti-Parrondo regions for the one-dimensional model were described in Refs.~\cite{EL12a,EL12b}.  Analytical formulas for these regions were found for $N\le6$ and graphical representations can be obtained for $N\le9$ (cf.\ Ref.~\cite{EL15}).  Furthermore, the means $\mu_B$ and $\mu_C$ could be computed for arbitrary parameter vectors $(p_0,p_1,p_2)$ for $N\le19$ in the random mixture case and for $N\le18$ in the nonrandom pattern case (with $r+s\le4$).  As just noted, it was found analytically that $\mu_B$ converges on at least 82.4\% of the parameter space $(0,1)^3$, whereas $\mu_{(1/2,1/2)}$ converges on the entire parameter space $(0,1)^3$.  Moreover, we found empirically in the random mixture case that $\mu_B$ had stabilized to 3 or more significant digits by $N=19$, whereas $\mu_{(1/2,1/2)}$ had stabilized to 6 significant digits even earlier.  The point is that simulations for larger $N$ were unnecessary and would yield nothing useful.  For that reason no simulations of $\mu_B$ or $\mu_{(1/2,1/2)}$ were carried out in Refs.~\cite{EL12a,EL12b} for the one-dimensional model.
 
Results for the two-dimensional model are less satisfactory.  Let us assume that $M,N\ge3$.  Graphical representations of three-dimensional cross-sections of the Parrondo and anti-Parrondo regions are available only in the case $M=N=3$, and analytical formulas are not available.  The means $\mu_B$ and $\mu_{(1/2,1/2)}$ can be computed for arbitrary parameter vectors $\bm p=(p_0,p_1,p_2,p_3,p_4)$ provided $MN\le20$, but there are only six such cases with $3\le M\le N$ [$(M,N)=(3,3),(3,4),(3,5),(3,6),(4,4),(4,5)$].  Analytical conditions for convergence of $\mu_B$ and $\mu_{(1/2,1/2)}$ as $M,N\to\infty$ are much more restrictive (see Section~\ref{convergence}) than in one dimension, and numerical results have not stabilized by the time that exact computations are no longer feasible.  Thus, simulations become not only useful but necessary to fully understand the behavior of the model.

\begin{figure}[htb]
\centering
\includegraphics[width = 4.75in]{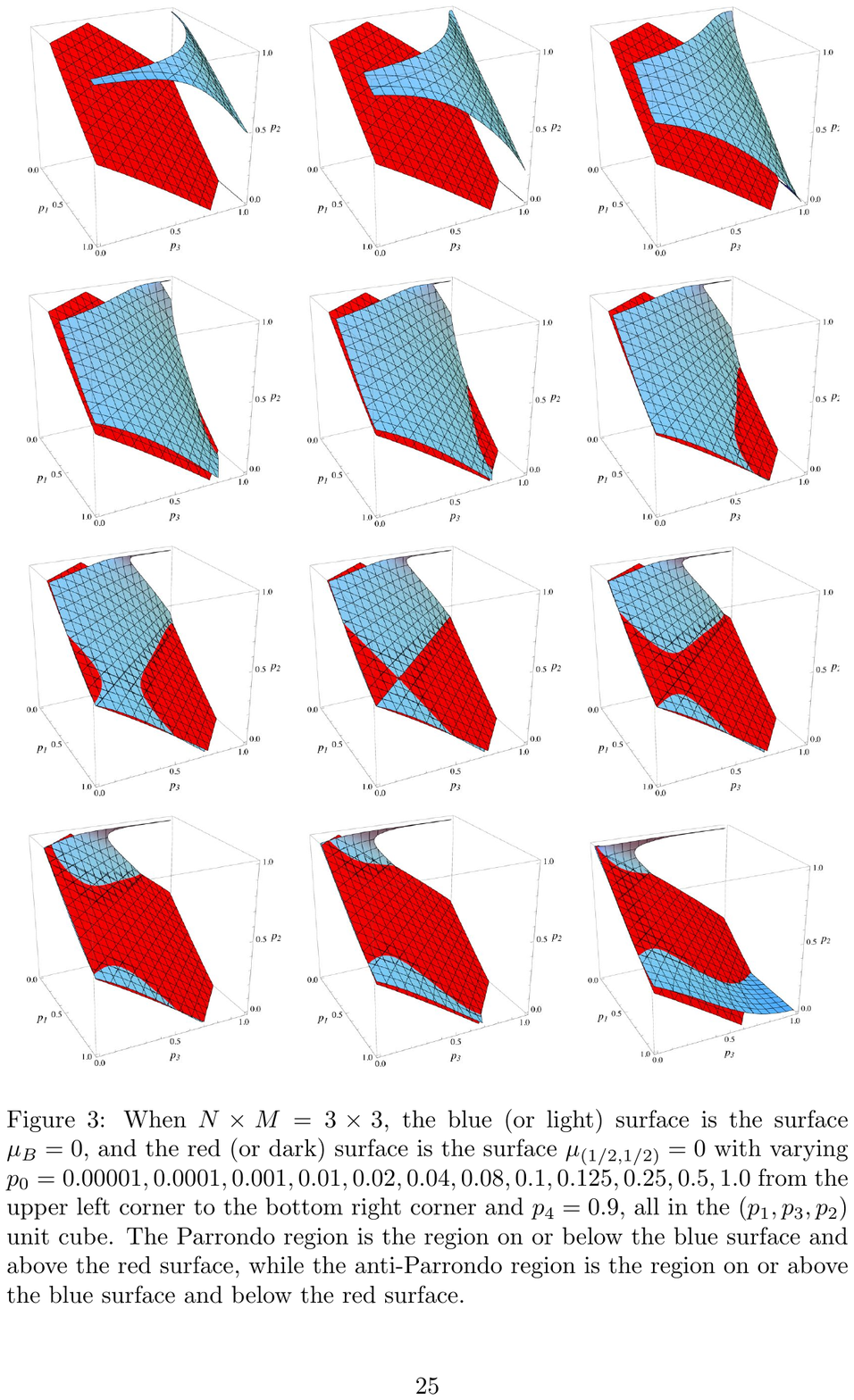}\quad

\caption{\label{p_4=0.9}When $(M,N)=(3,3)$, the blue (or light) surface is the surface $\mu_B=0$, and the red (or dark) surface is the surface $\mu_{(1/2,1/2)}=0$ with varying $p_0=0.00001, 0.0001, 0.001, 0.01, 0.02, 0.04, 0.08, 0.1, 0.125, 0.25, 0.5, 1.0$\ from the upper left corner to the bottom right corner and $p_4=0.9$, in the $(p_1,p_3,p_2)$ unit cube.  The three-dimensional cross-section of the Parrondo region is the region on or below the blue surface and above the red surface, while that of the anti-Parrondo region is the region on or above the blue surface and below the red surface.}
\end{figure}
\afterpage{\clearpage}

We begin with the case $M=N=3$, for which we can graph three-dimensional cross-sections of the Parrondo and anti-Parrondo regions for specified $p_0$ and $p_4$.  The one-step transition matrix $\bar{\bm P}$ is $26\times26$, allowing easy computation of the unique stationary distribution $\bar{\bm\pi}$ and hence the means $\mu_B$, $\mu_{(1/2,1/2)}$, and $\mu_{[2,2]}$.  The graphs of $\mu_B=0$ and $\mu_{(1/2,1/2)}=0$ are virtually indistinguishable from those of $\mu_B=0$ and $\mu_{[2,2]}=0$, and consequently only the former are displayed.  See Figure \ref{p_4=0.9}.  Denoting $\bar{\bm P}_B$ by \texttt{Pbar} and $\dot{\bar{\bm P}}_B\bm1$ by \texttt{Pbardotone}, the \textit{Mathematica} code needed to generate one of the cases in Figure \ref{p_4=0.9} has the following form, in which ellipses indicate omitted explicit formulas:

\begin{small}
\begin{verbatim}
p0 = 0.1; p4 = 0.9;
Pbar[p0_, p1_, p2_, p3_, p4_] := ...;
Pbardotone[p0_, p1_, p2_, p3_, p4_] := ...;
pibar = Array[x, {26}]; one = ConstantArray[1, {26}]; 
muB[p0_, p1_, p2_, p3_, p4_] := (solB = NSolve[{pibar == 
   pibar.Pbar[p0, p1, p2, p3, p4], pibar.one == 1}, pibar];
   mu1 = pibar.Pbardotone[p0, p1, p2, p3, p4] /. solB; 
   Return[mu1[[1, 1]]]);
muC[p0_, p1_, p2_, p3_, p4_] := muB[(p0 + 1/2)/2, 
   (p1 + 1/2)/2, (p2 + 1/2)/2, (p3 + 1/2)/2, (p4 + 1/2)/2];
Print[ContourPlot3D[{muB[p0, p1, p2, p3, p4] == 0, 
   muC[p0, p1, p2, p3, p4] == 0}, {p1, 0, 1}, {p3, 0, 1}, 
   {p2, 0, 1}, ContourStyle -> {RGBColor[135/255, 206/255, 235/255], 
   Red}, ViewPoint -> {3.3, -1.6, 1.7}]];
\end{verbatim}
\end{small}

We have chosen $p_4=0.9$ and a range of values of $p_0$, so that one can visually interpolate between successive figures to allow arbitrary $p_0$.  The Parrondo region is the region on or below the blue surface $\mu_B=0$ and above the red surface $\mu_{(1/2,1/2)}=0$.  Technically, the phrase ``below the blue surface'' is not quite accurate because $\mu_B(\bm p)$ is not necessarily increasing in $p_2$ (see Section \ref{mono-Section}).  What we mean is that the blue surface $\mu_B=0$ divides the unit cube into two regions, $\mu_B<0$ and $\mu_B>0$, and, for the most part, the region $\mu_B<0$ is below the blue surface $\mu_B=0$.  

Corresponding to each of the 12 cases in Figure \ref{p_4=0.9}, we have used simulation to estimate the volumes of the cross-sections of the Parrondo and anti-Parrondo regions.  See Table \ref{volumes}.

\begin{table}[ht]
\caption{\label{volumes}Simulated volumes of three-dimensional cross-sections of the Parrondo and anti-Parrondo regions corresponding to Figure \ref{p_4=0.9}.  Simulated estimates $\hat p$ are based on one million uniformly distributed points in the unit cube, with standard errors equal to $\sqrt{\hat p(1-\hat p)}/1000$.\medskip}
\catcode`@=\active \def@{\hphantom{0}}
\catcode`#=\active \def#{\hphantom{$-$}}
\begin{center}
\begin{footnotesize}
\begin{tabular}{lccccc}
\hline
\noalign{\smallskip}
& & \multicolumn{2}{c}{$C:=(1/2)(A+B)$} & \multicolumn{2}{c}{$C:=A^2B^2$}\\
\noalign{\smallskip}
\hline
\noalign{\smallskip}
$p_0$ & $p_4$ & Parrondo & anti-Parrondo & Parrondo & anti-Parrondo\\
      &       & region   &  region & region   &  region \\
\noalign{\smallskip}
\hline
\noalign{\smallskip}
0.00001 & 0.9  & 0.443631   & 0.000000 & 0.446307   &  0.000000 \\
0.0001 & 0.9   & 0.387100   & 0.000000 & 0.388498   &  0.000000 \\
0.001 & 0.9    & 0.274950   & 0.000000 & 0.276391   &  0.000000 \\
0.01 & 0.9     & 0.120368   & 0.000184 & 0.121233   &  0.000237 \\
0.02 & 0.9     & 0.077497   & 0.001474 & 0.077989   &  0.001648 \\
0.04 & 0.9     & 0.043804   & 0.005370 & 0.044782   &  0.005871 \\
0.08 & 0.9     & 0.019823   & 0.011948 & 0.020810   &  0.012661 \\
0.1  &  0.9    & 0.014627   & 0.014513 & 0.015121   &  0.015372 \\
0.125 & 0.9    & 0.010604   & 0.017140 & 0.010955   &  0.018200 \\
0.25  & 0.9    & 0.002872   & 0.025451 & 0.003123   &  0.026658 \\
0.5   & 0.9    & 0.002241   & 0.028884 & 0.002538   &  0.030795 \\
1.    & 0.9    & 0.011079   & 0.024876 & 0.012067   &  0.026160 \\
\noalign{\smallskip}
\hline
\end{tabular}
\end{footnotesize}
\end{center}
\end{table}

We were unsuccessful in trying to generate similar plots in the case $(M,N)=(3,4)$.  However, if we generate values of $\mu_B$ and $\mu_{(1/2,1/2)}$ at a grid of points $(p_1,p_3,p_2)$ with $p_0$ and $p_4$ fixed, and if we replace the instruction \texttt{ContourPlot3D} above by \texttt{ListContourPlot3D}, we get a figure that is a little less precise but still usable.  See Figure \ref{3x4ListContourPlot3D}.  Notice, for example, that the Parrondo region appears to have two connected components when $(M,N)=(3,3)$ and three connected components when $(M,N)=(3,4)$.

\begin{figure}[htb]
\centering
\includegraphics[width = 1.45in]{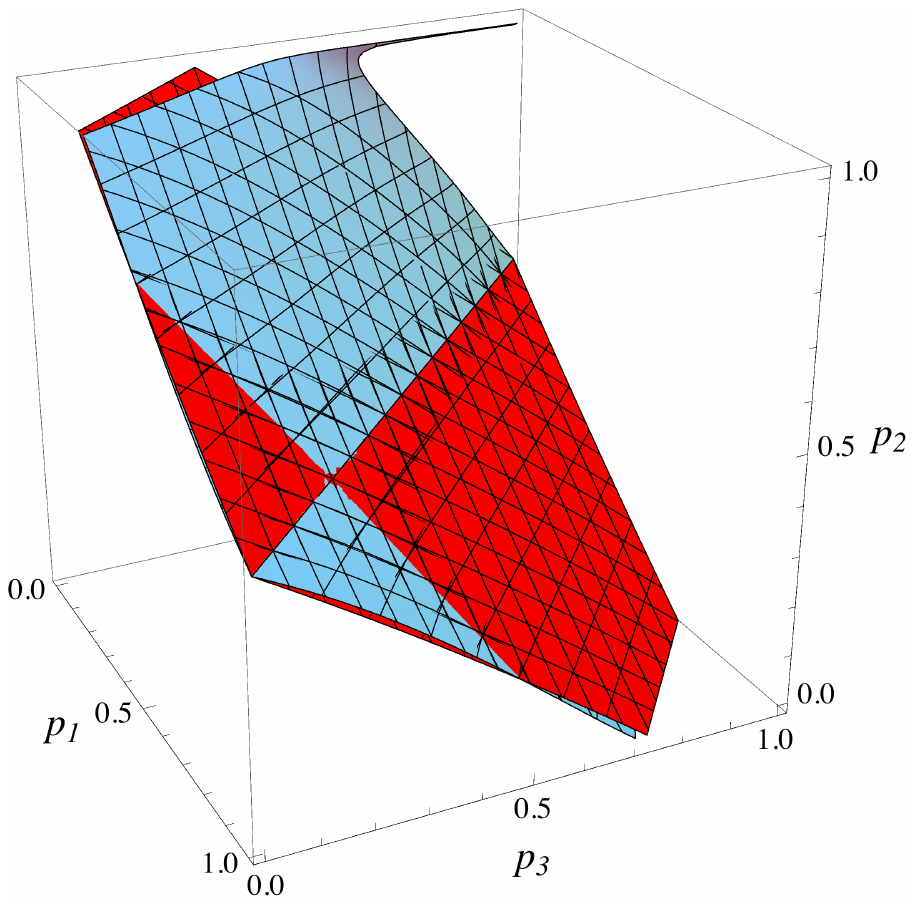}\quad
\includegraphics[width = 1.45in]{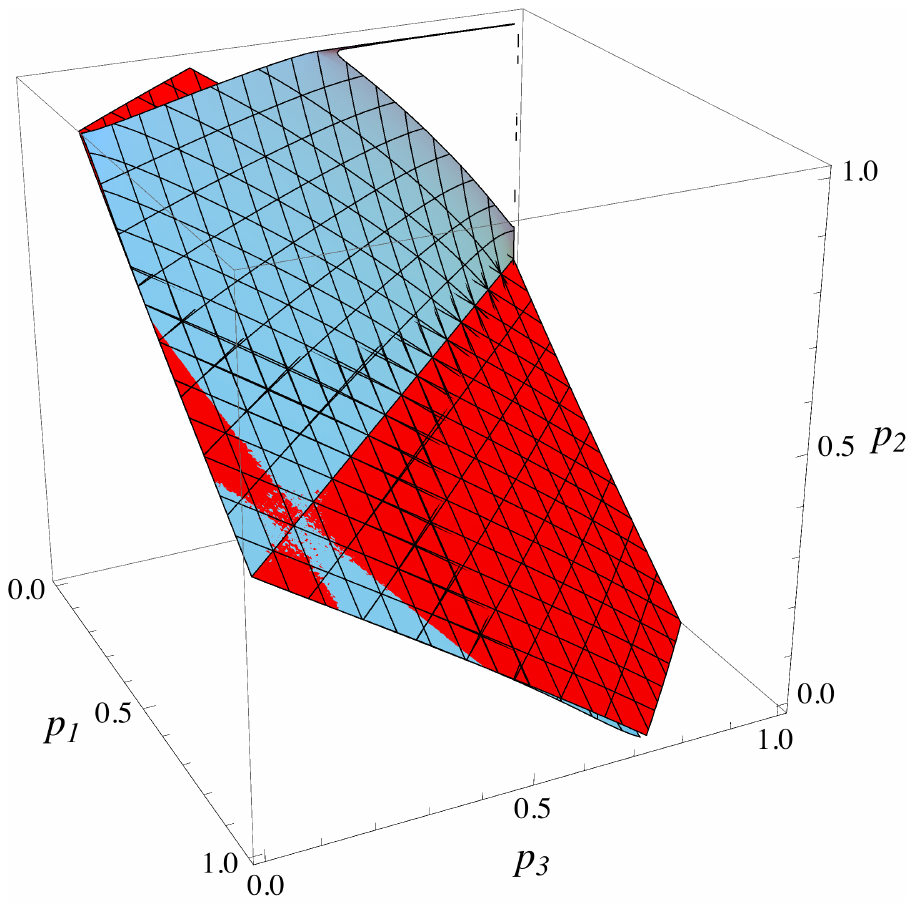}
\caption{\label{3x4ListContourPlot3D}When $(M,N)=(3,3)$ (left) and $(M,N)=(3,4)$ (right), the blue (or light) surface is the surface $\mu_B=0$, and the red (or dark) surface is the surface $\mu_{(1/2,1/2)}=0$ with $p_0=0.1$ and $p_4=0.9$, all in the $(p_1,p_3,p_2)$ unit cube.  The three-dimensional cross-section of the Parrondo region is the region on or below the blue surface and above the red surface, while that of the anti-Parrondo region is the region on or above the blue surface and below the red surface.}
\end{figure}

The method just described does not appear to extend to the case $(M,N)=(3,5)$ or $(M,N)=(4,4)$.  Instead, here we can generate two-dimensional cross-sections of the Parrondo and anti-Parrondo regions by evaluating the means at a grid of points with $p_0$, $p_1$, and $p_4$ fixed and $(p_3,p_2)=(i/100,j/100)$ with $i,j=0,1,\ldots,100$.  In the code above, we specify $p_1$ and replace the instruction \texttt{ContourPlot3D} above by \texttt{ListContourPlot}, eliminating \texttt{\{p1, 0, 1\}} and \texttt{ViewPoint -> \{3.3, -1.6, 1.7\}}.  See Figure \ref{4x4,p1}.

\begin{figure}[htb]
\centering
\includegraphics[width = 4.75in]{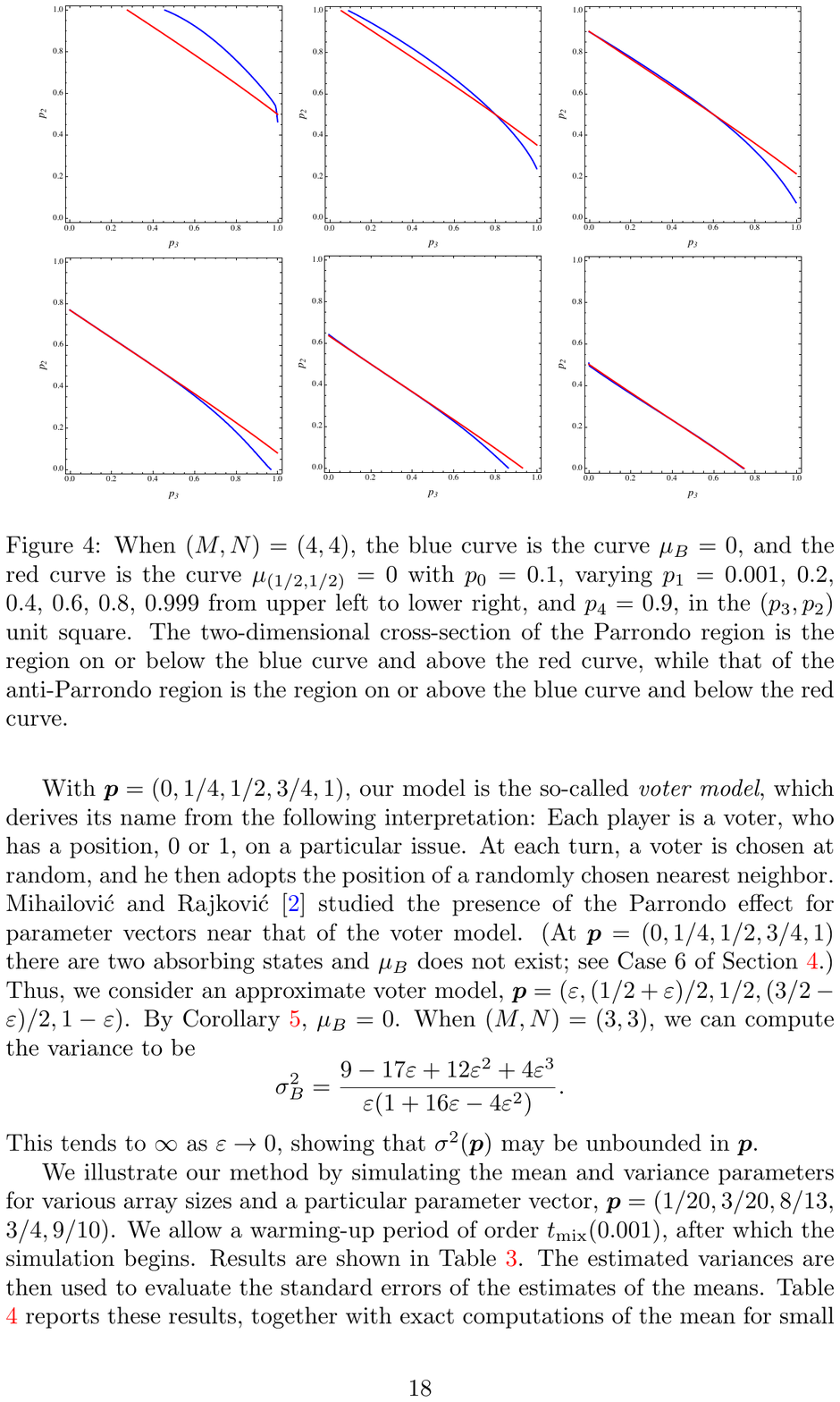}
\caption{\label{4x4,p1}When $(M,N)=(4,4)$, the blue curve is the curve $\mu_B=0$, and the red curve is the curve $\mu_{(1/2,1/2)}=0$ with $p_0=0.1$, varying $p_1=0.001$, 0.2, 0.4, 0.6, 0.8, 0.999 from upper left to lower right, and $p_4=0.9$, in the $(p_3,p_2)$ unit square.  The two-dimensional cross-section of the Parrondo region is the region on or below the blue curve and above the red curve, while that of the anti-Parrondo region is the region on or above the blue curve and below the red curve.}
\end{figure}

As we noted, there are only six pairs $(M,N)$ with $3\le M\le N$ in which exact computations are feasible.  The key quantity is the number of equivalence classes, and with $(M,N)=(4,5)$, there are 14676, which is similar to the number of equivalence classes in the one-dimensional model when $N=19$, namely 14310, and this is as far as we could compute the means in one dimension.  (The main issue is time, with memory being a minor issue.)

It is clear that, to proceed further, we must rely on simulation.  However, this presents us with its own set of complications.  By Theorem \ref{SLLN/CLT}, to estimate $\mu_B$, for example, it is enough to simulate the Markov chain with one-step transition matrix $\bm P^\circ$.  

How does one evaluate the standard error of such a simulated estimate?  By the CLTs in Theorems \ref{SLLN/CLT} and \ref{SLLN/CLT2}, the quantity of interest is $\sigma/\sqrt{n}$, but the variance parameters $\sigma^2$ of the CLT can be evaluated only for small $M$ and $N$.  Thus, we must simulate $\sigma^2$ as well.  In this regard, a relevant reference is Politis and Romano \cite{PR93}, who found a consistent estimator of $\sigma^2$ based on the first $n$ terms of a stationary strong-mixing sequence.

Let us now explain our simulation method.  Let $\xi_1, \xi_2, \ldots$ be a stationary strong-mixing sequence with mean and variance parameters
$$
\mu=\E[\xi_1]\quad\text{and}\quad\sigma^2=\Var(\xi_1)+2\sum_{i=1}^\infty\Cov(\xi_1,\xi_{i+1}).
$$
For our purposes, it will suffice to assume that $\{\xi_i\}$ is uniformly bounded and the coefficient of strong mixing decays geometrically fast.  Based on the first $n$ observations, how do we estimate $\sigma^2$?  Here is the approach of Politis and Romano.  Let $b$ be the block size and let $s_i:=\xi_i+\xi_{i+1}+\cdots+\xi_{i+b-1}$ be the sum of the terms in the $i$th block.  Then
\begin{eqnarray*}
\hat\sigma^2_{b,n}&:=&\frac{1}{n-b+1}\!\sum_{i=1}^{n-b+1}\bigg(\frac{1}{\sqrt{b}}\sum_{j=i}^{i+b-1}\xi_j-\sqrt{b}\,\bar\xi_n\bigg)^2=\frac{b}{n-b+1}\!\sum_{i=1}^{n-b+1}\bigg(\frac{s_i}{b}-\bar\xi_n\bigg)^2,
\end{eqnarray*}
where $\bar\xi_n:=(\xi_1+\xi_2+\cdots+\xi_n)/n$
and the sequence $s_1,s_2,\ldots,s_{n-b+1}$ can be evaluated recursively by keeping track of only the last $b$ observations.  Finally, letting $b:=\lfloor cn^{1/3}\rfloor$ for some constant $c$, this gives a consistent estimator of $\sigma^2$.
 
Another issue concerns the rate of convergence to equilibrium of the Markov chain.
This quantity is measured by the \textit{mixing time}, defined by
$$
t_{\text{mix}}(\eps)=\min\{n\ge1: {\sup}_x \|P^n(\bm x,\cdot)-\pi\|_{\text{TV}}\le\eps\},
$$
where $\|\mu-\nu\|_{\text{TV}}:=\frac{1}{2}\sum_{\bm x}|\mu(\bm x)-\nu(\bm x)|$ is the total variation norm.  For the Markov chain corresponding to game $A$ (or game $B$ and $p_0=p_1=p_2=p_3=p_4=1/2$) this has been evaluated.  That Markov chain is the so-called lazy random walk on the $MN$-dimensional ``hypercube'' $\{0,1\}^{MN}$, for which the mixing time satisfies (see Levin, Peres, and Wilmer \cite[p.~68]{LPW09})
$$
t_{\text{mix}}(\eps)\le MN[\log M+\log N+\log(1/\eps)].
$$
If we take $\eps=1/1000$, then $t_{\text{mix}}(0.001)\le MN(\log M+\log N+3\log10)$.  At a minimum we need $MN(\log M+\log N)$ steps for game $A$, presumably more for game $B$.

Recall that $\mu=\bm\pi\dot{\bm P}\bm1$, so letting $\mu_{\bm x,n}$ be the mean profit after $n$ steps of the Markov chain started in state $\bm x\in\Sigma$, then $\mu_{\bm x,n}=P^n(\bm x,\cdot)\dot{\bm P}\bm1$ (regarding $P^n(\bm x,\cdot)$ as a row vector) and
$$
|\mu_{\bm x,n}-\mu|\le |(P^n(\bm x,\cdot)-\bm\pi)\dot{\bm P}\bm1|\le\|P^n(\bm x,\cdot)-\bm\pi\|_1\|\dot{\bm P}\bm1\|_\infty,
$$
showing that $|\mu_{\bm x,n}-\mu|\le2\varepsilon$ if $n\ge t_{\text{mix}}(\eps)$.

The variance $\sigma^2$ can be computed for small $M$ and $N$, given the parameter vector $\bm p$, though it is too complicated to be computed algebraically.  However, there are some revealing special cases that are computable.  

With $\bm p=(0,1/4,1/2,3/4,1)$, our model is the so-called \textit{voter model}, which derives its name from the following interpretation: Each player is a voter, who has a position, 0 or 1, on a particular issue.  At each turn, a voter is chosen at random, and he then adopts the position of a randomly chosen nearest neighbor.  Mihailovi\'c and Rajkovi\'c \cite{MR06} studied the presence of the Parrondo effect for parameter vectors near that of the voter model.  (At $\bm p=(0,1/4,1/2,3/4,1)$ there are two absorbing states and $\mu_B$ does not exist; see Case 6 of Section \ref{reducible}.)  Thus, we consider an approximate voter model,
$\bm p=(\eps,(1/2+\eps)/2,1/2,(3/2-\eps)/2,1-\eps)$.
By Corollary \ref{symm-corollary}, $\mu_B=0$.  When $(M,N)=(3,3)$, we can compute the variance to be
$$
\sigma_B^2=\frac{9 - 17\eps + 12\eps^2 + 4\eps^3}{\eps(1 + 16\eps - 4\eps^2)}.
$$
This tends to $\infty$ as $\eps\to0$, showing that $\sigma^2(\bm p)$ may be unbounded in $\bm p$.

We illustrate our method by simulating the mean and variance parameters for various array sizes and a particular parameter vector, $\bm p=(1/20,3/20,8/13,\break 3/4,9/10)$.  We allow a warming-up period of order $t_{\text{mix}}(0.001)$, after which the simulation begins.  Results are shown in Table \ref{simulation}.  The estimated variances are then used to evaluate the standard errors of the estimates of the means.  Table \ref{exact,sim} reports these results, together with exact computations of the mean for small $M$ and $N$.
     
\begin{table}[htb]
\caption{\label{simulation}Exact ($\mu$, $\sigma^2$) and simulated ($\hat\mu$, $\hat\sigma^2$) means and variances of profit per turn at game $B$, game $(A+B)/2$, and game $A^2B^2$ with $(p_0,p_1,p_2,p_3,p_4)=(1/20,3/20,8/13,3/4,9/10)$. The third column $l$ is the warming-up period and the forth column $c$ is the constant for the block size $b:=\lfloor cn^{1/3}\rfloor$.}
\catcode`@=\active \def@{\hphantom{0}}
\begin{center}
\begin{footnotesize}
\begin{tabular}{cccccccc}
\noalign{\smallskip}
\hline
\noalign{\smallskip}
$(M,N)$  & $n$  & $l$ & $c$ & $\mu_B$   &   $\hat{\mu}_B$  & $\sigma^2_B$  & $\hat{\sigma}^2_B$ \\
\noalign{\smallskip}
\hline
\noalign{\smallskip}
$(3,3)$     & $10^8$    & $10^2$        & $10$ & $-0.209606$ & $-0.209695@$ & 113.864 & 111.544 \\
$(4,4)$     & $10^8$    & $5\times10^2$ & $10$ & $-0.188909$ & $-0.189613@$ & 228.548 & 213.019 \\
$(5,5)$     & $10^8$    & $10^3$        & $10$ &             & $-0.143901@$ &         & 306.867 \\
$(10,10)$   & $10^9$    & $10^4$        & $10$ &             & $-0.0568051$ &         & 456.483 \\
$(20,20)$   & $10^9$    & $10^4$        & $20$ &             & $-0.0464865$ &         & 355.961 \\
$(50,50)$   & $10^9$ & $10^5$        & $50$ &             & $-0.0472350$ &         & 197.114  \\
$(100,100)$ & $10^9$ & $10^6$        & $100$ &             &$-0.0484520$ &         & 120.574  \\
\noalign{\smallskip}
\hline
\end{tabular}
\end{footnotesize}
\end{center}

\begin{center}
\begin{footnotesize}
\begin{tabular}{cccccccc}
\noalign{\smallskip}
\hline
\noalign{\smallskip}
$(M,N)$  & $n$  & $l$ & $c$ & $\mu_{(1/2,1/2)}$   &   $\hat{\mu}_{(1/2,1/2)}$  & $\sigma^2_{(1/2,1/2)}$  & $\hat{\sigma}^2_{(1/2,1/2)}$ \\
\noalign{\smallskip}
\hline
\noalign{\smallskip}
$(3,3)$     & $10^8$    & $10^2$   & $10$ &0.0162586 & 0.0165893 & 4.59703 & 4.52893 \\
$(4,4)$     & $10^8$    & $5\times10^2$ & $10$ &  0.0229270 & 0.0227338 & 4.79211 & 4.74619 \\
$(5,5)$     & $10^8$    & $10^3$        & $10$ &   & 0.0242763 &         & 4.75117 \\
$(10,10)$   & $10^9$    & $10^4$        & $10$ &  & 0.0243652 &         & 4.77696 \\
$(20,20)$   & $10^9$    & $10^4$        & $20$ &  & 0.0245289  &         &   4.69812\\
$(50,50)$   & $10^9$    & $10^5$        & $50$ &   & 0.0246507  &         &  4.44180\\
$(100,100)$ & $10^9$    & $10^6$        & $100$ &   & 0.0244565 &         &         4.13757\\
\noalign{\smallskip}
\hline
\end{tabular}
\end{footnotesize}
\end{center}

\begin{center}
\begin{footnotesize}
\begin{tabular}{cccccccc}
\noalign{\smallskip}
\hline
\noalign{\smallskip}
$(M,N)$  & $n$  & $l$ & $c$ &  $\mu_{[2,2]}$   &   $\hat{\mu}_{[2,2]}$  & $\sigma^2_{[2,2]}$  & $\hat{\sigma}^2_{[2,2]}$\\
\noalign{\smallskip}
\hline
\noalign{\smallskip}
$(3,3)$     & $10^8$    & $10^2$   & $10$ &0.0172959 & 0.0175025 & 4.26540 & 4.26399\\
$(4,4)$     & $10^8$    & $5\times10^2$ & $10$ &  0.0231048 & 0.0231664 &         & 4.52584\\
$(5,5)$     & $10^8$    & $10^3$        & $10$ &     & 0.0241232 &         & 4.63878\\
$(10,10)$   & $10^9$    & $10^4$        & $10$ &   & 0.0244755 &         & 4.66672\\
$(20,20)$   & $10^9$    & $10^4$        & $20$ &   & 0.0244955 &         & 4.65870\\
$(50,50)$   & $10^9$ & $10^5$        & $50$ &   &  0.0244899 &         & 4.39306 \\
$(100,100)$   & $10^9$ & $10^6$        & $100$ &   & 0.0244883 &         & 4.07808 \\
\noalign{\smallskip}
\hline
\end{tabular}
\end{footnotesize}
\end{center}
\end{table}

\begin{table}[htb]
\caption{\label{exact,sim}Exact and simulated mean profits from game $B$, game $(1/2)(A+B)$, and game $A^2 B^2$ with $\bm p=(p_0,p_1,p_2,p_3,p_4)=(1/20,3/20,8/13,3/4,9/10)$.  Results are rounded to six significant digits.  For $MN\le20$, results are exact.  For $MN>20$, results are from simulations (see Table \ref{simulation} for more information) and standard errors are provided.\medskip}
\catcode`@=\active \def@{\hphantom{0}}
\catcode`#=\active \def#{\hphantom{$-$}}
\tabcolsep=1.5mm
\begin{center}
\begin{footnotesize}
\begin{tabular}{ccccccc}
\noalign{\smallskip}
\hline
\noalign{\smallskip}
$(M,N)$ & $\mu_B$ & (st.~error) & $\mu_{(1/2,1/2)}$ & (st.~error) & $\mu_{[2,2]}$ & (st.~error)\\
\noalign{\smallskip}
\hline
\noalign{\smallskip}
$(3,3)$ & $-0.209606@$ &---& 0.0162586   &---& 0.0172959 &---\\ 
$(3,4)$ & $-0.218065@$ &---& 0.0187059   &---& 0.0195027 &---\\ 
$(3,4)$ & $-0.220219@$ &---& 0.0190801   &---& 0.0197024 &---\\ 
$(3,6)$ & $-0.221078@$ &---& 0.0191405   &---& 0.0196551 &---\\ 
\noalign{\medskip}
$(4,4)$ & $-0.188909@$ &---& 0.0229270   &---& 0.0231048 &---\\ 
$(4,5)$ & $-0.171680@$ &---& 0.0235580   &---&   \\
\noalign{\smallskip}
\hline
\noalign{\smallskip}
$(5,5)$ & $-0.143901@$ & $(0.00175176)@$ & 0.0242763 & $(0.000217972)@$ & 0.0241232 & $(0.000215378)@$\\
$(10,10)$ & $-0.0568051$ & $(0.000675635)$ & 0.0243652 & $(0.0000691156)$ & 0.0244755 & $(0.0000683134)$ \\
$(20,20)$ & $-0.0464865$ & $(0.000596625)$ & 0.0245289 & $(0.0000685428)$ & 0.0244955 & $(0.0000682547)$ \\
$(50,50)$ & $-0.0472350$ & $(0.000443975)$ & 0.0246507 & $(0.0000666468)$ & 0.0244899 & $(0.0000662802)$ \\
$(100,100)$ & $-0.0484520$ & $(0.000347238)$ & 0.0244565 & $(0.0000643239)$ & 0.0244883 & $(0.0000638598)$ \\
\noalign{\smallskip}
\hline`
\end{tabular}
\end{footnotesize}
\end{center}
\end{table}

Thus, the length of a simulation of the mean profit will depend on both the rate of convergence to equilibrium of the underlying Markov chain and on the variance of the profit, which determines the standard error.  Mihailovi\'c and Rajkovi\'c \cite {MR06} did not realize this and simulated with a much too small sample size.  This led them to the conclusion that ``capital evolution depends to a large degree on the lattice [i.e., array] size.''  As we have seen, there is some evidence that mean profit converges as $M,N\to\infty$, but a more persuasive argument against this conclusion can be given by rerunning the simulations that led to this conclusion with an adequate sample size.  The specific parameter values used in Ref.~\cite{MR06} were not revealed, making it impossible to replicate the experiment.  However, we can use the default parameters in one of the programs of Mihailovi\'c and Rajkovi\'c, namely $p_0=1/20$, $p_1=3/20$, $0\le p_2\le 1$, $p_3=3/4$, and $p_4=9/10$.  See Figure~\ref{MR}.

\begin{figure}[htb]
\centering
\includegraphics[width = 3.in]{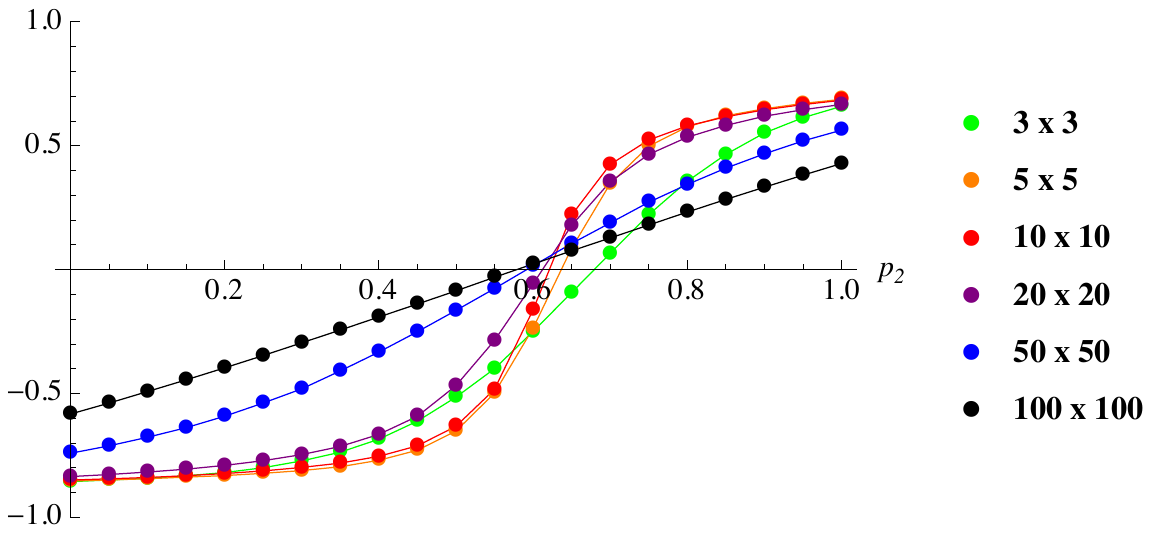}
\includegraphics[width = 3.in]{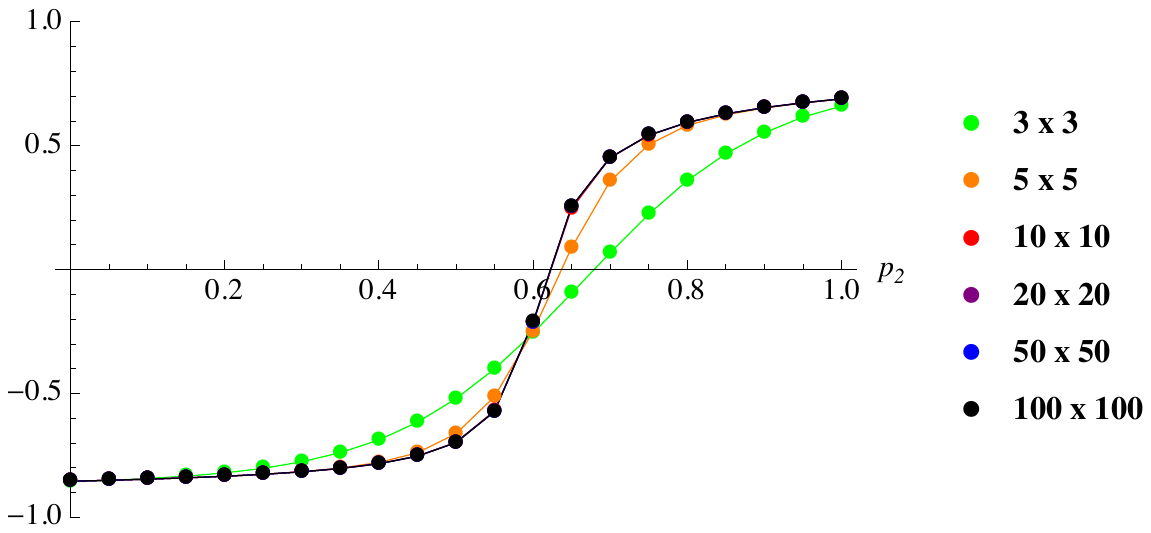}
\caption{\label{MR}In both figures the simulated means $\mu_B(1/20,3/20,p_2,3/4,9/10)$ are graphed as functions of $p_2$.  The distinction is that, in the first figure, we used the sample size of Mihailovi\'c and Rajkovi\'c \cite {MR06};  in the second figure, we used a larger sample size that is suitable for this problem.  In the latter figure, the curve for the $100\times100$ case hides the curves for the $10\times10$, $20\times20$, and $50\times50$ cases.}
\end{figure}

\section{Conclusions}

We considered the spatially dependent Parrondo games of Mihailovi\'c and Raj\-kovi\'c \cite{MR06}, which assume (a) $MN$ players arranged in an $M\times N$ array with periodic boundary conditions and (b) five possibly biased coins, numbered 0--4.  In game $B$ a randomly chosen player tosses coin $m$, where $m$ is the number of winners among the player's four nearest neighbors.  Game $A$ is the special case in which all five coins are fair, and game $C$ combines game $A$ and game $B$.  More precisely, game $C$ is either the randomly mixed game $C:=\gamma A+(1-\gamma)B$, where $0<\gamma<1$, or the nonrandom periodic pattern $C:=A^rB^s$, where $r,s\ge 1$.  We obtained a strong law of large numbers and a central limit theorem for the sequence of profits (1 for a win, $-1$ for a loss) to the set of $MN$ players, assuming repeated play of game $B$ or of game $C$.  To maximize the values of $M$ and $N$ for which exact computations are feasible, we regarded states as equivalent if they are equal after rotation and/or reflection of rows and/or columns of the $M\times N$ array of players, as well as after matrix transposition if $M=N$.  This allowed us to compute $\mu_B$ (and $\mu_C$), the mean profit to the set of $MN$ players playing game $B$ (or game $C$) for pairs $(M,N)$ with $3\le M\le N$ and $MN\le20$.  When $(M,N)=(3,3)$ or $(M,N)=(3,4)$ we can graph three-dimensional cross-sections of the Parrondo and anti-Parrondo regions for specified $p_0$ and $p_4$.  When $(M,N)=(3,5)$ or $(M,N)=(4,4)$ we can graph two-dimensional cross-sections of the Parrondo and anti-Parrondo regions for specified $p_0$, $p_1$, and $p_4$.  We found sufficient conditions for the means $\mu_B$ and $\mu_C$ (with $C:=(A+B)/2$) to converge as $M,N\to\infty$.  Together with simulation results of an adequate sample size, this casts doubt on a finding of Mihailovi\'c and Rajkovi\'c that ``capital evolution depends to a large degree on the lattice size.''  Finally, we found that monotonicity of the mean profit function $\mu_B$, as a function of $(p_0,p_1,p_2,p_3,p_4)$, where $p_m$ is the probability of heads for coin $m$, does not hold in general.   It does hold on the subset $0\le p_0\le p_1\le p_2\le p_3\le p_4\le1$.

\end{document}